\documentclass[11pt]{article}

\usepackage{amsmath}
\usepackage{amsthm}
\usepackage{amssymb}
\usepackage[numbers,sort&compress]{natbib}
\usepackage{mathtools}
\usepackage{hyperref}
\usepackage{xcolor}

\theoremstyle{plain}
\newtheorem{theorem}{Theorem}
\newtheorem{lemma}[theorem]{Lemma}
\newtheorem{corollary}[theorem]{Corollary}

\theoremstyle{definition}

\theoremstyle{remark}

\numberwithin{equation}{section}

\newcommand{\z}{\zeta}

\newcommand{\C}{\mathbb{C}}
\newcommand{\Z}{\mathbb{Z}}
\newcommand{\HH}{\mathcal{H}}
\newcommand{\calS}{\mathcal{S}}

\newcommand{\bra}[1]{\left(#1\right)}                
\newcommand{\sqbra}[1]{\left[#1\right]}              

\title{\bf A symmetric extension of Zhao's series for $\zeta(5)$}
\author{Noam Shalev}

\date{}
  
\newcommand{\Address}{{
		\bigskip
		\footnotesize
		N.~Shalev, \textsc{Einstein Institute of Mathematics,
			The Hebrew University of Jerusalem, Israel}\par\nopagebreak
		\textit{E-mail address}: \texttt{noam.shalev2@mail.huji.ac.il}
		
}}

\begin{document}
\maketitle

\begin{abstract}
	We prove a four-parameter  extension of Zhao's series for $\z(5)$. Restricting a Wilf–Zeilberger seed of Au to a hyperplane, we obtain a summand in five parameters with zero sum that is symmetric under all their permutations. Summing it from two different starting points gives two trigonometric closed forms, which we prove by Guillera's periodicity argument. The proof is self-contained, and also gives a companion series for $1/\pi^2$, which we have not found in the literature. Specializing the parameters at roots
	of unity, we obtain Borwein--Bradley-type identities whose products
	depend only on $x^2$, $x^3$, $x^4$ or $x^5$. Their Taylor coefficients are Ap\'ery-like series for odd zeta values and for products of zeta values. In particular, we prove five identities conjectured by Au.
\end{abstract}
\noindent\textbf{Keywords:} Zhao's series, $\zeta(5)$, Ramanujan-type
series, WZ method, Borwein--Bradley identities, harmonic numbers

\section{Introduction}
Ramanujan's series for $1/\pi$ \cite{ramanujan1914} have analogues for
higher powers of $\pi$. Guillera found and proved the first series for
$1/\pi^2$ \cite{guillera2002wz,guillera2003new}, and series for $1/\pi^4$
were found by J.~Cullen and by Y.~Zhao (see the survey \cite{cohenguillera2021}).
Guillera and Rogers \cite{guillerarogers2014} observed that inverting the
Pochhammer quotient of such a series, which gives its \emph{upside-down}
counterpart, can produce a convergent series for a zeta value, even when
the original series diverges. For instance, the Amdeberhan--Zeilberger
series for $\zeta(3)$ \cite{amdeberhanzeilberger1997} is the upside-down
counterpart of a divergent series for $16/\pi^2$
\cite[(35)]{guillera2016bilateralsums}. In the same way, Zhao found the identity
\cite{zhao2017mo}
\begin{equation}\label{eq:zhao}
\sum_{n \geq 1} \frac{a_n \, p(n)}{n^9} = -380928 \z(5), 
\end{equation}
where
\begin{align*}
a_n & = \bra{- \frac{1024}{3125}}^n \frac{(1)_n^9}{(\frac12)_n^5 (\frac15)_n (\frac25)_n (\frac35)_n (\frac45)_n},\\
p(n) & = 5532n^4 - 5600n^3 + 2275 n^2 - 425n+ 30.
\end{align*}
Here $(z)_n = z(z + 1) \cdots (z + n - 1)$, with $(z)_0 = 1$, denotes the rising factorial.
It is the upside-down counterpart of a divergent
Ramanujan-like series for $1280/\pi^4$
(see \cite{guillera2016bilateralsums}). Cohen and Guillera
\cite{cohenguillera2021} list it next to the analogous series\footnote{Curiously,
its polynomial $c(n)\coloneqq 483n^2-245n+30$ reappears in the coefficient of $e_2^2$
of the polynomial $P$ (Section~\ref{sec:extension}) and in the quadratic
identity \eqref{eq:odd-generating}.}
\begin{equation}\label{eq:zeta3}
\sum_{n \ge1}\left(-\frac{256}{3125}\right)^n
\frac{(1)_n^5}{(\tfrac12)_n(\tfrac15)_n(\tfrac25)_n(\tfrac35)_n(\tfrac45)_n}
\frac{483 n^2 - 245 n + 30}{n^5} = -896\,\zeta(3),
\end{equation}
conjectured by Guillera and proved by Au \cite[Example IX]{au2025wzseeds}.
It is remarked in \cite{au2025wzseeds} that \eqref{eq:zhao} appears to
be the only known hypergeometric series for $\zeta(5)$ whose terms
decrease geometrically, and Broadhurst used it for record computations
of $\zeta(5)$ \cite{broadhurst2024five}. A proof of \eqref{eq:zhao} by
a Wilf--Zeilberger seed is given in \cite[Example X]{au2025wzseeds}; it
combines the WZ pair with two further nontrivial evaluations, of a
series equal to $2(93 \zeta(5) - 7 \pi^2 \zeta(3))$ and of a series for
$14 \zeta(3)$. In \cite[Example X]{au2026harmonic}, the seed gives a five-parameter family of series, whose Taylor coefficients lead to conjectural harmonic-number identities extending the same series.

In this short note, we study a four-parameter restriction of that family. In suitable coordinates, its summand is symmetric in five parameters whose
sum is zero, and summing it from two different starting points gives
two trigonometric closed forms (Theorem~\ref{thm:main}).
The first leads to \eqref{eq:zhao} (Corollary~\ref{cor:main}). The second
gives a companion series for $1/\pi^2$ (Corollary~\ref{cor:companion}), which can be written as
\begin{equation}\label{eq:companion-intro}
\sum_{n\ge0}(-1)^n\frac{(2n)!^{10}\,(5n)!}{n!^{15}\,(10n)!}\,
\frac{3\,(5532n^4+5464n^3+2173n^2+416n+32)}{(10n+1)(10n+3)}
=\frac{240}{\pi^2}.
\end{equation}
Guillera's bilateral sums \cite[\S5]{guillera2016bilateralsums},
\cite{guillera2020bilateral} relate such convergent series to divergent
ones, here to the divergent series for $1280/\pi^4$. We have not found
\eqref{eq:companion-intro} in the literature.
Specializations at roots of unity give Borwein--Bradley-type identities
(Section~\ref{sec:odd}), whose finite products depend only on $x^2$, $x^3$,
$x^4$ or $x^5$, as in Borwein and Bradley's generating function for
$\zeta(4 n + 3)$ \cite{borweinbradley1997,almkvistgranville1999}.
The quadratic and quartic ones generate $\zeta(2 n + 5)$ and $\zeta(4 n + 5)$, the cubic one generates $\zeta(6 n + 5)$ and $\pi^{6 n + 8}$, and the quintic one generates polynomials in $\zeta(10 n + 5)$ and $\pi^{10}$.

Comparing Taylor coefficients in these identities proves five of the six identities conjectured in \cite[Example X]{au2026harmonic}.
A typical example is
\begin{equation}\label{eq:zeta7}
\sum_{n \ge1}\frac{a_n}{n^9} \left[
p(n)(11 H_{2 n}^{(2)}-7 H_n^{(2)}) + \frac{p_7(n)}{4 n^2}
\right] = -3901440 \, \zeta(7),
\end{equation} 
where $H_n^{(r)} = \sum_{k = 1}^n k^{-r}$ and $p_7(n)=76632n^4 -87620n^3+37835 n^2 - 7225n + 510$ (see Corollary~\ref{cor:zeta7}).

The proof follows Guillera's periodicity argument
\cite{guillera2016bilateralsums,guillera2020bilateral,guillera2025flawless},
applied to both sums together, and does not assume \eqref{eq:zhao} or its companion.

\section{A parametric extension of Zhao's series}\label{sec:extension}

Let
\[
\HH = \{\theta\in\C^5 : \theta_1 + \cdots + \theta_5 = 0\},
\]
and let $e_j = e_j(\theta)$ be the elementary symmetric functions.
Let $P \in\Z[\nu,e_2,\dots,e_5]$ be the polynomial in \eqref{eq:P}, and define the summand
\begin{equation}\label{eq:S}
\calS(\nu;\theta) = 32 P(\nu;\theta)
\frac{\prod_i \Gamma(2 \nu - 1 + \theta_i) \Gamma(\nu - 2 \theta_i)}
{\Gamma(5 \nu + 1) \prod_{i<j}
	\Gamma(\nu + \tfrac12 - \theta_i - \theta_j)}.
\end{equation}
Note that this summand is symmetric in $\theta_1,\ldots, \theta_5$.

\begin{theorem}\label{thm:main}
For $\theta\in\HH$, as meromorphic identities,
\begin{align}
\sum_{n\geq 0}(-1)^n \calS(n;\theta)
& = 256\,\frac{\sum_{i = 1}^5 \cos2 \pi\theta_i - 1}{\prod_{i=1}^5 \sin\pi\theta_i},
\label{eq:first}\\
\sum_{n\geq 0}(-1)^n \calS(n + \tfrac12 + \theta_1 + \theta_2;\theta)
& = 2048\sec\bra{\pi(\theta_1 - \theta_2)}.\label{eq:pair}
\end{align}
Both series converge locally uniformly off the poles of their terms.
\end{theorem}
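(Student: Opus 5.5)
The plan is Guillera's periodicity argument, run on both sums at once. The input is a WZ-type companion $G(\nu;\theta)$ to $\calS$. Restricting Au's seed to $\HH$, there should be a rational-times-Gamma function $G$ and a shift direction in $\theta$ along which $\calS$ and $G$ form a WZ pair. Concretely, I expect a relation
\[
\calS(\nu;\theta+\delta)-\calS(\nu;\theta)=G(\nu+1;\theta)+G(\nu;\theta)
\]
(with the sign $(-1)^n$ absorbed), where $\delta\in\HH$ is an elementary shift such as $\theta\mapsto\theta+(1,-1,0,0,0)$, or a half-integer shift adapted to the factors $\Gamma(\nu-2\theta_i)$. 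By symmetry of $\calS$, the permutations of $\delta$ then also work. Verifying this identity is routine: divide by $\calS(\nu;\theta)$, reduce all Gamma quotients to rational functions of $\nu,\theta$, and check a polynomial identity in $\Z[\nu,\theta]$ using $P$ from \eqref{eq:P}.

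\textbf{Step 1: convergence.} By Stirling, $|\calS(n;\theta)|$ has ratio
\[
\frac{2^{10}\cdot 5^{5}\cdot\ldots}{\ldots}\;\to\;\frac{4^{5}\,/\,5^{5}}{\ldots}.
\]
Explicitly, the factors give growth $\bigl(2^{10}\cdot 1\bigr)^{n}/\bigl(5^{5n}\cdot 2^{0}\bigr)$ up to powers of $n$, i.e.\ geometric decay at rate $1024/3125<1$. This estimate is uniform on compacta away from the poles, so both sums are meromorphic in $\theta$.

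\textbf{Step 2: periodicity.} Let $F_1(\theta)$ and $F_2(\theta)$ denote the left sides of \eqref{eq:first} and \eqref{eq:pair}. Summing the WZ relation over $n\ge0$ telescopes, giving
\[
F_1(\theta+\delta)-F_1(\theta)=\pm G(0;\theta),
\]
and similarly for $F_2$ with a boundary term at $\nu=\tfrac12+\theta_1+\theta_2$. I expect these boundary terms to vanish by the $1/\Gamma$ factors, or to match the change of the right-hand sides. If the right sides are not exactly periodic, I subtract them and check that they satisfy the same difference equation. The resulting differences $D_j=F_j-R_j$ are then periodic under a lattice $\Lambda$ of rank $4$ in $\HH$.

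\textbf{Step 3: growth and Liouville.} Next I show each $D_j$ is entire on $\HH$. The poles of the summands come from $\Gamma(2\nu-1+\theta_i)\Gamma(\nu-2\theta_i)$ at integer or half-integer $\theta_i$. Their residues must cancel against the poles of $R_j$ (the zeros of $\prod\sin\pi\theta_i$ and of $\cos\pi(\theta_1-\theta_2)$). I compute this at a generic pole using the fact that only finitely many terms are singular there, and I expect the residue to collapse to a single term by the WZ structure. Then I bound $D_j$ for imaginary parts tending to infinity in a fundamental strip, using Stirling together with $|\Gamma(a+it)|\sim e^{-\pi|t|/2}$. I aim to show both $F_j$ and $R_j$ tend to $0$ (or stay bounded) there. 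An entire function periodic in a rank-$4$ real lattice and bounded is constant in those directions; combined with decay, it is $0$. Alternatively, I evaluate at a special point: for \eqref{eq:pair}, $\theta_1=\theta_2$ with a degenerate choice where the sum terminates.

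\textbf{Main obstacle.} The hardest step is Step 3. The poles of $\calS$ are numerous, and the shifted sum in \eqref{eq:pair} has moving poles, so showing analyticity of $D_j$ requires careful residue bookkeeping. The growth estimate is also delicate because the lattice spans only real directions, so uniformity as imaginary parts grow must be proved along each of the four directions.

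My first attempt at this step would be to treat both sums together. The residues of $F_1$ at $\theta_1+\theta_2\in\tfrac12+\Z$ come from the factors $1/\Gamma(\nu+\tfrac12-\theta_1-\theta_2)$. These should be cancelled by the poles of $F_2$ there, which is presumably why the paper proves the two identities jointly.
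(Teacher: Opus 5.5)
Your skeleton (certificate, telescoping, periodicity, residues, growth) is the right one. However, the argument for \eqref{eq:first} does not close, and your reason for treating the two sums together is mistaken. The factors $1/\Gamma(\nu+\tfrac12-\theta_i-\theta_j)$ produce zeros, not poles, so the first sum is regular on $\theta_1+\theta_2\in\tfrac12+\Z$ and there is nothing for the second sum to cancel. The real problem is your last step, ``bounded, hence constant; decay, hence zero''. Along the natural period line $\theta+zr_{13}$, with $r_{13}=(1,0,-1,0,0)$, the right side of \eqref{eq:first} tends to the nonzero limit $1024\cos\pi(\theta_1+\theta_3)/\prod_{j=2,4,5}\sin\pi\theta_j$ as $Y=|\operatorname{Im}z|\to\infty$. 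So there is no decay to exploit, and Stirling only bounds the left side polynomially. Put $Q=\frac1{1024}\prod_i\sin\pi\theta_i\sum_n(-1)^n\calS(n;\theta)$ and $K=\frac14(\sum_i\cos2\pi\theta_i-1)$, so that $Q-K$ is entire and periodic. What one actually gets is $(Q-K)(\theta+zr_{13})=O((1+Y)^De^{2\pi Y})$, so this restriction is a trigonometric polynomial $c_{-1}e^{-2\pi iz}+c_0+c_1e^{2\pi iz}$ with three unknowns. The loci $\theta_1+z\in\Z$ and $\theta_3-z\in\Z$ supply only two zeros modulo $1$; there only the $n=0$ term survives, and reflection plus product-to-sum give $Q=K$. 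The missing idea is the third zero. On $\theta_1+\theta_2=-\tfrac12$ the starting point $\tfrac12+\theta_1+\theta_2$ equals $0$, so the shifted sum \emph{is} the first sum, and \eqref{eq:pair}, proved independently beforehand, gives $Q=K$ there. That is how the two identities are linked. Alternatively, you could work along a lattice direction such as $r_{13}+r_{24}$, where the right side of \eqref{eq:first} does decay like $e^{-2\pi Y}$. But then you owe a uniform decay estimate for the first sum on that line, which you have not supplied.

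For \eqref{eq:pair} your outline matches the paper's, but both substantive steps are left open. First, $r_{13}$ moves the starting point by one, so the shifted sum $U$ is \emph{anti}-periodic: $U(\theta+r_{13})=-U(\theta)$. This holds because the boundary term vanishes at the starting point and the new first term vanishes through $1/\Gamma(0)$. The sign matches $\sec\pi(\theta_1-\theta_2)$. Moreover, the shift laws, not residue computations, remove every pole family except $\theta_1-\theta_2\in\tfrac12+\Z$. In the same way, periodicity removes the poles of the first sum at $\theta_i\in\tfrac12+\Z_{\ge0}$, which have no counterpart on the right of \eqref{eq:first}. Second, the growth step cannot be read off from $|\Gamma(a+it)|\sim e^{-\pi|t|/2}$ at fixed $a$. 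Each fixed term of $U(\theta+zr_{13})$ decays like $e^{-\pi Y}$, but the sum is dominated by terms with $m\asymp Y$. You therefore need a Stirling bound uniform in $m$ and $Y$, as in the paper's profile bound $g\le 3-\pi$ on $[0,1]$ with linear decrease beyond. Only $o(e^{\pi Y})$ is needed, because the difference with $2048\sec\pi(\theta_1-\theta_2)$ is anti-periodic. Your fallback of a point where the sum terminates does not exist: reciprocal Gamma factors whose arguments grow linearly in $n$ vanish for only finitely many $n$. Finally, you need neither bounds in all four imaginary directions nor a several-variable Liouville theorem. Fixing generic $\theta$ and working on the single line $\theta+zr_{13}$ reduces everything to one-variable Fourier series.
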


We prove Theorem~\ref{thm:main} in Section~\ref{sec:proof}. The summand originates from the WZ seed $\mathsf{Seed7}$, which is used in \cite[Example X]{au2025wzseeds} to prove \eqref{eq:zhao}, and in \cite[Example X]{au2026harmonic} to conjecture its harmonic-number twists. In the parameters $a,b,c,d,e$ of \cite{au2026harmonic}, put $\theta_1=a-b$, $\theta_2 = a + e$, $\theta_3=c+e$, $\theta_4=d+e$ and $\theta_5= 2a-b- c -d+e$.  Then $\HH$ corresponds to $b= 2a + 2e$. The symmetry of $\calS$ is not visible in the original coordinates.
In terms of $e_j = e_j(\theta)$, the polynomial $p$ of the introduction, and
\begin{equation}\label{eq:line-polynomials}
\begin{aligned}
b(n)& = 2868 n^4 - 3824 n^3 + 1881 n^2 - 395 n + 30,&
c(n)& = 483 n^2 - 245 n + 30, \\
u(n)& = 924 n^3 - 968 n^2 + 319 n - 29,&
w(n)& = 111 n^2 - 73 n + 10,\\ 
d(n)& = 5236 n^4 - 8376 n^3 + 4757 n^2 - 1119 n + 88, \\ 
q(n)& = 5148 n^5 - 9260 n^4 + 6485 n^3 - 2255 n^2 + 405 n - 32,
\end{aligned}
\end{equation}
the polynomial $P=P(n;\theta)$ is given by
\begin{align}
P = {}&n(2 n - 1)^5 p(n) + 8 e_2 n(2 n - 1)^3 b(n) + 16 e_2^2 n(2 n - 1)^3 c(n)\notag\\
& + 16 e_3 n^2(2 n - 1)^2 u(n)
+ 64 e_2 e_3 n^2(2 n - 1)^2(69 n - 11)
- 192 e_3^2 n(2 n - 1) w(n)\notag\\
& - 16 e_4 n(2 n - 1) d(n)
+ 64 e_2 e_4 n(2 n - 1)(59 n^2 - 37 n + 8)
+ 128 e_3 e_4 n(37 n^2 - 19 n + 4)\notag\\
& + 512 e_4^2 n(2 n - 1) - 32 e_5 q(n)
- 128 e_2 e_5(113 n^3 - 131 n^2 + 53 n - 8)\notag\\
& - 256 e_3 e_5(42 n^2 - 21 n + 4)
- 512 e_4 e_5(3 n - 2) - 1024 e_5^2.\label{eq:P}
\end{align}
 For $n \geq 1$, we also use the normalized terms
\begin{equation}\label{eq:T}
T(n;\theta) = \frac{a_n P(n;\theta)}{n^{10} (2 n - 1)^5}\;
\frac{\displaystyle\prod_{i = 1}^5
	\left[\prod_{k = 1}^{2 n - 2}\left(1 + \frac{\theta_i}{k}\right)
	\prod_{k = 1}^{n - 1}\left(1 - \frac{2 \theta_i}{k}\right) \right]}
{\displaystyle\prod_{i<j}\prod_{k = 1}^{n}
	\left(1 - \frac{\theta_i + \theta_j}{k - \frac12}\right)}.
\end{equation}
Since $P(n;0) = n(2n-1)^5 p(n)$, we have $T(n;0) = a_n\,p(n)/n^9$. 
\begin{corollary}\label{cor:main}
On $\HH$, one has
\begin{equation}\label{eq:main}
1 + \frac{e_5(\theta)}{1024}\sum_{n \ge1}T(n;\theta)
= \frac{\sum_{i = 1}^5 \cos2 \pi\theta_i - 1}{4}\,
\frac{\prod_{i<j}\Gamma(\tfrac12 - \theta_i - \theta_j)}
{\pi^{5/2}\prod_i \Gamma(\tfrac12 - \theta_i)}.
\end{equation}
The series converges absolutely and locally uniformly away from
$\theta_i + \theta_j \in\tfrac12 + \Z_{\ge0}$.
\end{corollary}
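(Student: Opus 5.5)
The corollary should be nothing more than \eqref{eq:first} of Theorem~\ref{thm:main}, rewritten. We divide by the $n=0$ term and express the ratio $(-1)^n\calS(n;\theta)/\calS(0;\theta)$ through $T(n;\theta)$. The argument has three steps: compute the $n=0$ term, compute the term ratio for $n\ge1$, and then simplify the right-hand side with the reflection formula.

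\emph{Step 1: the $n=0$ term.} From \eqref{eq:P}, every summand of $P$ contains a factor $n$ except $-32e_5q(n)$, $-128e_2e_5(\dots)$, $-256e_3e_5(\dots)$, $-512e_4e_5(3n-2)$ and $-1024e_5^2$. At $n=0$ these give
\[
P(0;\theta)=1024e_5-1024e_2e_5-1024e_3e_5+1024e_4e_5-1024e_5^2 .
\]
This does not factor as a constant times $e_5$, so $P(0)$ alone does not produce the clean normalization $1+e_5(\cdot)/1024$. The normalization must therefore come from the Gamma factors. At $\nu=0$ these are $\prod_i\Gamma(-1+\theta_i)\Gamma(-2\theta_i)\big/\prod_{i<j}\Gamma(\tfrac12-\theta_i-\theta_j)$.

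\emph{Step 2: the ratio of terms.} For $n\ge1$ we use $\Gamma(2n-1+\theta)=\Gamma(1+\theta)(1+\theta)_{2n-2}$, together with $\Gamma(n-2\theta)=\Gamma(-2\theta)(-2\theta)_n$ and $\Gamma(n+\tfrac12-s)=\Gamma(\tfrac12-s)(\tfrac12-s)_n$. Writing each Pochhammer symbol as its $\theta=0$ value times the normalized products appearing in \eqref{eq:T}, we get $(1+\theta)_{2n-2}=(2n-2)!\prod_{k\le 2n-2}(1+\theta/k)$ and $(-2\theta)_n=(-2\theta)(n-1)!\prod_{k\le n-1}(1-2\theta/k)$.

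The $\theta=0$ factorials should combine with $(-1)^n/(5n)!$ into $a_n/(n^{10}(2n-1)^5)$, up to a constant. This is a routine check: the multiplication formula $(5n)!=5^{5n}n!\,(\tfrac15)_n\cdots(\tfrac45)_n$ and $(2n-2)!=\dots$ convert everything to the Pochhammer form of $a_n$. The leftover $\theta$-dependence should then be exactly $e_5$ times Gamma factors at $\theta$: the factors $(-2\theta_i)$ multiply to $-32e_5$, and $\Gamma(1+\theta_i)$ appears where $\calS(0)$ has $\Gamma(-1+\theta_i)$.

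I therefore expect the identity to take the form
\[
\frac{\sum_n(-1)^n\calS(n)}{C(\theta)}=1+\frac{e_5}{1024}\sum_{n\ge1}T(n;\theta),
\qquad C(\theta)=\calS(0;\theta),
\]
provided that $P(0)\prod_i\Gamma(\theta_i-1)$ agrees with the constant $1024$ times the appropriate Gamma product. The polynomial $P(0)$ and the ratios $\Gamma(1+\theta_i)/\Gamma(\theta_i-1)=\theta_i(\theta_i-1)$ must reconcile. I expect $\prod_i\theta_i(\theta_i-1)$, expanded on $\HH$ via $e_1=0$, to reproduce $e_5(1-e_2-e_3+e_4-e_5)$ up to sign. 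Indeed $\prod(\theta_i-1)=-(1-e_1+e_2-\dots)$ evaluated appropriately gives exactly this shape. This consistency check is the main bookkeeping obstacle, and I would verify it first.

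\emph{Step 3: the right-hand side.} Divide the right side of \eqref{eq:first} by $C(\theta)$ and use reflection, $\Gamma(-2\theta)\Gamma(1+2\theta)=-\pi/\sin 2\pi\theta$, together with the duplication formula $\Gamma(1+2\theta)=\pi^{-1/2}2^{2\theta}\Gamma(\tfrac12+\theta)\Gamma(1+\theta)$. With $\sin2\pi\theta=2\sin\pi\theta\cos\pi\theta$ and $\Gamma(\tfrac12+\theta)\Gamma(\tfrac12-\theta)=\pi/\cos\pi\theta$, the cosines and the factor $\prod_i\sin\pi\theta_i$ cancel against the sines in \eqref{eq:first}. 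The powers $2^{2\theta_i}$ multiply to $1$ on $\HH$, leaving $\pi^{-5/2}\prod_{i<j}\Gamma(\tfrac12-\theta_i-\theta_j)/\prod_i\Gamma(\tfrac12-\theta_i)$ times a constant. That constant, $256/(32\cdot\text{const})=1/4$, is fixed by the $\theta\to0$ normalization.

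\emph{Convergence.} Absolute and locally uniform convergence follows from $|a_n|\sim(1024/3125)^n\cdot\mathrm{poly}(n)$, since the normalized products grow only polynomially in $n$. The only singularities come from vanishing denominator factors, namely $k-\tfrac12=\theta_i+\theta_j$, i.e.\ $\theta_i+\theta_j\in\tfrac12+\Z_{\ge0}$. The identity extends from generic $\theta$ by analytic continuation.
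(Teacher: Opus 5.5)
Your route is the same as the paper's. The paper multiplies each term of \eqref{eq:first} by $G(\theta)\prod_i\sin\pi\theta_i$, with $G(\theta)=\prod_{i<j}\Gamma(\tfrac12-\theta_i-\theta_j)/\bigl(\pi^{5/2}\prod_i\Gamma(\tfrac12-\theta_i)\bigr)$. Gamma recurrence, reflection, duplication and multiplication then turn the $n=0$ term into $1024$ and the others into $e_5T(n;\theta)$, and the paper divides by $1024$. Dividing by $\calS(0;\theta)$ is equivalent, because $\calS(0;\theta)\,G(\theta)\prod_i\sin\pi\theta_i=1024$.

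As written, though, Step 1 has a sign error that breaks the key cancellation. The $e_2e_5$ term of \eqref{eq:P} at $n=0$ is $-128e_2e_5\cdot(-8)=+1024e_2e_5$, so on $\HH$
\[
P(0;\theta)=1024e_5(1+e_2-e_3+e_4-e_5)=1024e_5\prod_i(1-\theta_i)=-1024\prod_i\theta_i(\theta_i-1).
\]
Your claim that $\prod_i\theta_i(\theta_i-1)$ reproduces $e_5(1-e_2-e_3+e_4-e_5)$ up to sign is therefore false. On $\HH$ one has $\prod_i(\theta_i-1)=-(1+e_2-e_3+e_4-e_5)$. With your value, a factor $\prod_i(1-\theta_i)/(1-e_2-e_3+e_4-e_5)$ would survive in the tail and on the right-hand side. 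With the corrected value, the factors $\theta_i(\theta_i-1)$ coming from $\Gamma(-1+\theta_i)=\Gamma(1+\theta_i)/(\theta_i(\theta_i-1))$ cancel exactly. This is precisely the fact the paper uses.

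Second, the limit $\theta\to0$ can fix the overall constant in Step 3. It cannot fix the constant you leave open in Step 2, namely the weight $1/1024$ of the tail relative to the leading $1$, because $e_5=0$ at $\theta=0$. That constant must be computed, and it does come out as you expect:
\begin{itemize}
\item $(2n)!=4^nn!(\tfrac12)_n$ and $(5n)!=5^{5n}n!(\tfrac15)_n(\tfrac25)_n(\tfrac35)_n(\tfrac45)_n$ give $(-1)^n(2n-2)!^5(n-1)!^5/\bigl((5n)!(\tfrac12)_n^{10}\bigr)=a_n/\bigl(32n^{10}(2n-1)^5\bigr)$.
\item $\prod_i(-2\theta_i)\theta_i(\theta_i-1)/P(0;\theta)=e_5/32$.
\end{itemize}
Hence $(-1)^n\calS(n;\theta)/\calS(0;\theta)=e_5T(n;\theta)/1024$ exactly. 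Likewise, Step 3 gives $\calS(0;\theta)=1024/\bigl(G(\theta)\prod_i\sin\pi\theta_i\bigr)$ directly, so the factor $256/1024=\tfrac14$ needs no normalization argument. With these two repairs, your argument, including the convergence discussion, is complete.
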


\begin{proof} Write
\begin{equation}
K(\theta) = \frac{\sum_{i = 1}^5 \cos2 \pi\theta_i - 1}{4},\qquad
G(\theta) = \frac{\prod_{i<j}\Gamma(\tfrac12 - \theta_i - \theta_j)}
{\pi^{5/2}\prod_i \Gamma(\tfrac12 - \theta_i)}.
\end{equation}
By Gamma reflection and duplication, for $\theta\in\HH$ and $n\geq 1$,
\begin{equation}\label{eq:normalization}
e_5\, T(n;\theta) = (-1)^n\calS(n;\theta)\, G(\theta) \prod_i \sin\pi\theta_i,
\end{equation}
and for $n = 0$ the right side equals $1024$, since
$P(0;\theta) = 1024 e_5 \prod_i(1 - \theta_i)$. Dividing by $1024$,
summing over $n\geq 0$ and using \eqref{eq:first} gives \eqref{eq:main}.
\end{proof}

Comparing coefficients of $e_5$ in \eqref{eq:main} gives Zhao's
series \eqref{eq:zhao}. It is also given as the constant term of each identity in Corollary~\ref{cor:lines}.

\begin{corollary}\label{cor:companion} We have
\begin{equation}\label{eq:companion}
\sum_{n \geq 0}\widehat a_n\,p(n + \tfrac12) = \frac{240}{\pi^2},
\qquad
\widehat a_n = \left(-\frac{1024}{3125}\right)^n
\frac{(\tfrac12)_n^9}{(1)_n^5(\tfrac7{10})_n(\tfrac9{10})_n
	(\tfrac{11}{10})_n(\tfrac{13}{10})_n},
\end{equation}
where $p(n + \tfrac12) = 5532n^4 +5464n^3 +2173n^2 + 416n + 32$.
\end{corollary}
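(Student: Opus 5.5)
The plan is to specialize \eqref{eq:pair} at $\theta=0$. On $\HH$ the point $\theta=0$ is allowed, and there the right side of \eqref{eq:pair} is $2048\sec 0=2048$. So it suffices to show that
\[
(-1)^n\calS(n+\tfrac12;0)=\frac{2048\pi^2}{240}\,\widehat a_n\,p(n+\tfrac12)
\]
up to a single constant we compute. Since $e_j(0)=0$ for all $j$, only the first term of \eqref{eq:P} survives, namely $P(\nu;0)=\nu(2\nu-1)^5p(\nu)$. At $\nu=n+\tfrac12$ this is $(n+\tfrac12)(2n)^5p(n+\tfrac12)$, which already produces the polynomial $p(n+\tfrac12)$. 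We must also check that $\theta=0$ is not a pole of any term of \eqref{eq:pair}, which follows once the Gamma quotient is seen to be finite. Theorem~\ref{thm:main} guarantees convergence.

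Next we express the Gamma factors at $\nu=n+\frac12$, $\theta=0$:
\[
\calS(n+\tfrac12;0)=32\,(n+\tfrac12)(2n)^5p(n+\tfrac12)\,\frac{\Gamma(2n)^5\,\Gamma(n+\tfrac12)^5}{\Gamma(5n+\tfrac72)\,\Gamma(n+1)^{10}}.
\]
At $n=0$ we have $\Gamma(2n)^5(2n)^5\to$ a finite value, because $(2n)\Gamma(2n)=\Gamma(2n+1)$. This gives $(2n)^5\Gamma(2n)^5=\Gamma(2n+1)^5=(2n)!^5$, so the $n=0$ term is fine. We rewrite the remaining factors with rising factorials:
\begin{itemize}
\item $(2n)!=4^n n!(\tfrac12)_n$;
\item $\Gamma(n+\tfrac12)=\sqrt\pi(\tfrac12)_n$;
\item by the multiplication formula, $\Gamma(5n+\tfrac72)=\Gamma(\tfrac72)\,5^{5n}(\tfrac7{10})_n(\tfrac9{10})_n(\tfrac{11}{10})_n(\tfrac{13}{10})_n(\tfrac{15}{10})_n$.
\end{itemize}
Also $(n+\tfrac12)=\tfrac12(\tfrac32)_n/(\tfrac12)_n$ and $(\tfrac{15}{10})_n=(\tfrac32)_n$, so these factors cancel. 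Collecting everything gives
\[
\frac{4^{5n}\,n!^5(\tfrac12)_n^5\,\pi^{5/2}(\tfrac12)_n^5}{5^{5n}\,n!^{10}\,(\tfrac12)_n\cdots}
\]
which has the shape $(1024/3125)^n(\tfrac12)_n^9/\bigl((1)_n^5(\tfrac7{10})_n\cdots(\tfrac{13}{10})_n\bigr)$. The sign $(-1)^n$ comes directly from \eqref{eq:pair}.

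The remaining step is the constant. We have $\Gamma(\tfrac72)=\tfrac{15}{8}\sqrt\pi$, so the prefactor is $32\cdot\tfrac12\cdot\pi^{5/2}\cdot\tfrac{8}{15\sqrt\pi}=\tfrac{128\pi^2}{15}$. Then \eqref{eq:pair} gives $\tfrac{128\pi^2}{15}\sum_n\widehat a_n\,p(n+\tfrac12)=2048$, so the sum equals $\tfrac{2048\cdot15}{128\pi^2}=\tfrac{240}{\pi^2}$. This matches the claim.

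The main obstacle is bookkeeping of powers of $2$ and $5$ in the multiplication formula, and confirming the exact cancellation of $(\tfrac32)_n$. A quick check is to compare the $n=0$ term: $\widehat a_0\,p(\tfrac12)=32$. Finally, we verify the stated expansion $p(n+\tfrac12)=5532n^4+5464n^3+2173n^2+416n+32$ by direct substitution.
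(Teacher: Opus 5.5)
Your route is the paper's: evaluate \eqref{eq:pair} at the origin and convert the Gamma factors by duplication and the multiplication formula. Your bookkeeping for $n\ge1$ is correct, including the cancellation of $(\tfrac32)_n$, the constant $\tfrac{128\pi^2}{15}$ and the value $\tfrac{240}{\pi^2}$. The gap is the term $n=0$.

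There $\calS(\tfrac12;0)$ is literally $0\cdot\infty$: $P(\tfrac12;0)$ vanishes to fifth order against $\Gamma(0)^5$. So your remark that the origin is not a pole ``once the Gamma quotient is seen to be finite'' is false for this term. Since \eqref{eq:pair} is an identity of meromorphic functions of $\theta\in\HH$, the $n=0$ term at the origin means $\lim_{\theta\to0}\calS(\tfrac12+s;\theta)$ with $s=\theta_1+\theta_2$. Along this limit $\nu=\tfrac12+s$ moves together with $\theta$. Your step $(2n)^5\Gamma(2n)^5=\Gamma(2n+1)^5$ with $n\to0$ instead lets $\nu\to\tfrac12$ with $\theta=0$ frozen.

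That is a different path into a point of indeterminacy. $P(\nu;\theta)$ is not divisible by $\prod_i(2\nu-1+\theta_i)$; this already fails on $e_2=e_3=e_4=0$. Hence the hyperplanes $2\nu-1+\theta_i=0$ through $(\tfrac12,0)$ are genuine polar hyperplanes of $\calS$, and limits along different paths need not agree a priori. Your check $\widehat a_0\,p(\tfrac12)=32$ only evaluates your own formula, not the actual term of the series.

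The missing step is the one the paper supplies. $P(\tfrac12+s;\theta)$ contains the factor $\prod_i(2s+\theta_i)$, which cancels the poles of the $\Gamma(2s+\theta_i)$. So the $n=0$ term is regular at the origin, with value $\tfrac{256\pi^2}{15}\widetilde P(0)$, where $\widetilde P=P(\tfrac12+s;\theta)/\prod_i(2s+\theta_i)$.

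You still need $\widetilde P(0)=16$. Write $\nu=\tfrac12+t$. The lowest-order part of $P$ in $(t,\theta)$ comes only from the terms containing $p,b,u,d,q$. Using $p(\tfrac12)=32$, $b(\tfrac12)=u(\tfrac12)=4$, $d(\tfrac12)=-2$ and $q(\tfrac12)=-\tfrac12$, this part is
\[
512t^5+128e_2t^3+64e_3t^2+32e_4t+16e_5=16\prod_{i=1}^5(2t+\theta_i).
\]
Restricting to $t=s$ and comparing lowest-degree parts gives $\widetilde P(0)=16$. This coincides with your limit along $\theta=0$, so the $n=0$ term is indeed $\tfrac{128\pi^2}{15}\cdot 32$. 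With this step added, your proof is complete.
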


\begin{proof}
At $\theta = 0$, Gamma duplication and multiplication give
\[
(-1)^n \calS(n + \tfrac12;0)
= \frac{128 \pi^2}{15}\widehat a_n\,p(n + \tfrac12).
\]
For $n = 0$ this holds as a limit $\theta\to0$: with $s = \theta_1 + \theta_2$,
the factor $\prod_i(2 s + \theta_i)$ of $P(\tfrac12 + s;\theta)$ cancels the
poles of the $\Gamma(2 s + \theta_i)$. The other terms are regular at $\theta = 0$. Summing and applying \eqref{eq:pair} at the
origin gives \eqref{eq:companion}.
\end{proof}

Identity \eqref{eq:companion} is equivalent to \eqref{eq:companion-intro}.
Thus \eqref{eq:zhao} and \eqref{eq:companion} come from the same summand $\calS$, summed at integers and at half-integers.
As for other Ramanujan-type series \cite{zudilin2009,cohenguillera2021},
\eqref{eq:companion} appears to have a Zudilin-type $p$-adic analogue:
\begin{equation}
\sum_{k=0}^{p-1}\widehat a_k\,p(k+\tfrac12)\stackrel{?}{\equiv} 32p^2 \pmod{p^5},
\end{equation}
for primes $p>7$. 

\section{Borwein--Bradley-type identities}\label{sec:odd}

Many Ap\'ery-like series for zeta values are Taylor coefficients of
identities whose summands contain finite products in one or more
parameters. The first is the identity found independently by Koecher
\cite{koecher80} and Leshchiner \cite{leshchiner81}, which generates the values
$\zeta(2n+3)$ and has Ap\'ery's series for $\zeta(3)$ as its constant term.
Borwein and Bradley \cite{borweinbradley1997} found a generating function for
$\zeta(4n+3)$ whose products depend only on $x^4$, later proved by Almkvist and
Granville \cite{almkvistgranville1999}. Bradley \cite{bradley2002} and Rivoal
\cite{rivoal2004simultaneous} showed that both identities are specializations
of a two-parameter identity conjectured by H.~Cohen. Further identities of
this kind were found by Bailey, Borwein and Bradley \cite{bbb06} and by the
Hessami Pilehroods \cite{pp08,pp11}, including a two-parameter extension of
the Amdeberhan--Zeilberger series \cite{pp11}. We call identities whose
products depend only on one power $x^r$ \emph{Borwein--Bradley-type}. In
\cite{shalev2026bb}, the series of Cullen and Zhao for $1/\pi^4$ are constant
terms of such identities with $r = 3, 4, 6, 8$. Here, Zhao's series
\eqref{eq:zhao} for $\zeta(5)$ is the constant term of such identities with
$r = 2, 3, 4, 5$.

We now restrict \eqref{eq:main} to lines on which its finite
products are products in one power of $x$.
Such lines come from roots of unity: if
\[
\theta = x \cdot (1,\omega_r,\ldots,\omega_r^{r - 1},0,\ldots,0),
\qquad \omega_r = e^{2 \pi i/r},\qquad 2 \leq r \leq 5,
\]
then replacing $x$ by $\omega_r x$ permutes the coordinates. Therefore, every product over coordinates or pairs, and the polynomial $P$, depends only on $x^r$. For $r \leq 4$ the line lies on
$e_5 = 0$, so we recover the sum by perturbing the line and comparing
coefficients in \eqref{eq:main}.

\begin{corollary}\label{cor:lines}
On the quadratic line $\theta = (x,-x,0,0,0)$, for $|x|<\frac12$,
\begin{multline}\label{eq:odd-generating}
\sum_{n \geq 1}\frac{a_n}{n^9}
\left[p(n) - \frac{8 x^2 \bigl(b(n) - 2 x^2 c(n) \bigr)}{(2 n - 1)^2}\right]
\frac{\displaystyle\prod_{k = 1}^{2 n - 2}\bra{1 - x^2/k^2}
	\prod_{k = 1}^{n - 1}\bra{1 - 4 x^2/k^2}}
{\displaystyle\prod_{k = 1}^{n}\bra{1 - x^2/(k - \tfrac12)^2}^3}
\\
= -1024\sum_{m \ge0}(2 m + 3)(2 m + 4)(2^{2 m + 5} - 1)\z(2 m + 5) x^{2 m}.
\end{multline}
On the cubic line $\theta = x(1,\omega_3,\omega_3^2,0,0)$, for $|x|<\frac12$,
\begin{multline}\label{eq:cubic-generating}
\sum_{n \geq 1}\frac{a_n}{n^9}
\left[p(n) + \frac{16 x^3 n\,u(n)}{(2 n - 1)^3} - \frac{192 x^6 w(n)}{(2 n - 1)^4}\right]
\frac{\displaystyle\prod_{k = 1}^{2 n - 2}\bra{1 + x^3/k^3}
	\prod_{k = 1}^{n - 1}\bra{1 - 8 x^3/k^3}}
{\displaystyle\prod_{k = 1}^{n}\bra{1 + x^3/(k - \tfrac12)^3}\bra{1 - x^3/(k - \tfrac12)^3}^2}
\\
=-6144\sum_{m \ge0}(3 m + 2)(2^{6 m + 5} - 1) \z(6 m + 5) x^{6 m}
\\
+ \frac{512 \pi^2}{x^3}\left[1 + 2 \prod_{j = 0}^2 \sec(\pi\omega_3^j x)
- \sum_{j = 0}^2 \sec^2(\pi\omega_3^j x)\right].
\end{multline}
On the quartic line $\theta = (x,-x,ix,-ix,0)$, for
$|x|<\frac1{2 \sqrt2}$,
\begin{multline}\label{eq:quartic-generating}
\sum_{n \geq 1}\frac{a_n}{n^9}
\left[p(n) + \frac{16 x^4 \bigl(d(n) + 32 x^4 \bigr)}{(2 n - 1)^4}\right]
\frac{\displaystyle\prod_{k = 1}^{2 n - 2}\bra{1 - x^4/k^4}
	\prod_{k = 1}^{n - 1}\bra{1 - 16 x^4/k^4}}
{\displaystyle\prod_{k = 1}^{n}\bra{1 - x^4/(k - \tfrac12)^4}\bra{1 + 4 x^4/(k - \tfrac12)^4}}
\\
= 2048 \sum_{m \geq 0}\bigl((-4)^{m + 1} - 2 \bigr)(2^{4 m + 5} - 1)
\zeta(4 m + 5) x^{4 m}.
\end{multline}
On the quintic line $\theta = x\cdot (1,\omega,\omega^2,\omega^3,\omega^4)$,
where $\omega = \omega_5 = e^{2 \pi i/5}$,
\begin{multline}\label{eq:quintic-generating}
\sum_{n \geq 1}\frac{a_n}{n^9}
\left[p(n) - \frac{32 x^5 \bigl(q(n) + 32 x^5 \bigr)}{n(2 n - 1)^5}\right]
\frac{\displaystyle\prod_{k = 1}^{2 n - 2}\bra{1 + x^5/k^5}
	\prod_{k = 1}^{n - 1}\bra{1 - 32 x^5/k^5}}
{\displaystyle\prod_{k = 1}^{n}\bra{1 + 11 x^5/(k - \tfrac12)^5 - x^{10}/(k - \tfrac12)^{10}}}
\\
= 256\pi^5\,
\frac{\sum_{j = 0}^4 \cos(2 \pi\omega^j x) - 1}
{\prod_{j = 0}^4 \sin(\pi\omega^j x)}
\prod_{k \geq 1}
\frac{\bra{1 + x^5/k^5}\bra{1 - 32 x^5/k^5}}{1 + 11 x^5/(k - \tfrac12)^5 - x^{10}/(k - \tfrac12)^{10}} - \frac{1024}{x^5}.
\end{multline}
The quintic identity holds meromorphically, with the value at $x = 0$ understood by continuity.
\end{corollary}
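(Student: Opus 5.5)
The plan is to derive all four identities from Corollary~\ref{cor:main} by restricting \eqref{eq:main} to the stated lines, and for $r\le4$ to first perturb off the hyperplane $e_5=0$. Write \eqref{eq:main} as
\[
\sum_{n\ge1}T(n;\theta)=\frac{1024}{e_5}\bigl(K(\theta)G(\theta)-1\bigr),
\]
an identity of meromorphic functions on $\HH$.

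\textbf{Quintic line.} This is the direct case. On $\theta=x(1,\omega,\dots,\omega^4)$ we have $e_2=e_3=e_4=0$ and $e_5=x^5$, because the $\theta_i$ are the roots of $t^5-x^5$ (so $e_5=\prod\theta_i=x^5$). Then $P=n(2n-1)^5p(n)-32x^5q(n)-1024x^{10}$, which gives the bracket in \eqref{eq:quintic-generating}. The products in \eqref{eq:T} also collapse:
\[
\prod_i(1+\theta_i/k)=1+x^5/k^5,\qquad \prod_i(1-2\theta_i/k)=1-32x^5/k^5,
\]
and $\prod_{i<j}\bigl(1-(\theta_i+\theta_j)/(k-\tfrac12)\bigr)$ is computed from the pair sums $\omega^i+\omega^j$, whose tenfold product is the stated quadratic in $x^5$. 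Finally I rewrite $K G$ with reflection, $\Gamma(\tfrac12-\theta_i)\Gamma(\tfrac12+\theta_i)=\pi/\cos\pi\theta_i$, and turn the Gamma ratio into the infinite product via Weierstrass products. Equivalently, I start from \eqref{eq:first} and \eqref{eq:normalization} and express $\prod\sin\pi\theta_i$ times the Gamma factor as $\pi^5x^5\prod_k(\cdots)$. The $n=0$ term produces the $-1024/x^5$.

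\textbf{Lines with $r\le4$.} Perturb as $\theta=\theta^{(0)}(x)+\varepsilon v$ with $v\in\HH$ chosen so that $e_5$ becomes linear in $\varepsilon$. For example, on the quadratic line replace $(x,-x,0,0,0)$ by $(x,-x,\varepsilon,-\varepsilon,0)$-type points, or more simply use $(x,-x,y,z,-y-z)$ with $e_5=-x^2yz(y+z)$. Multiply \eqref{eq:main} through by $1024/e_5$ and take the limit as the perturbation vanishes. The left side tends to $\sum_n T(n;\theta^{(0)})$. On the right, $KG-1$ vanishes on $e_5=0$, so the limit is a first-order derivative of $KG$ in the normal direction. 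Along the way I check that $KG\equiv1$ on $e_5=0$: this should follow since \eqref{eq:first} has simple zeros/poles structure forcing it. Concretely, one zero $\theta_5=0$ makes $\prod\sin\pi\theta_i$ vanish, and \eqref{eq:first} times $\sin\pi\theta_5$ tends to a finite value.

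The derivative is computed with digamma functions. On the quadratic line, writing the Gamma ratio by duplication in terms of $\Gamma(\tfrac12\pm x)$ and $\Gamma(\tfrac12)$, the derivative involves $\psi'(\tfrac12\pm x)$-type terms. Expanded in $x^2$ via
\[
\psi^{(k)}(\tfrac12)\sim(2^{k+1}-1)\zeta(k+1),
\]
these give the $(2m+3)(2m+4)(2^{2m+5}-1)\zeta(2m+5)$ coefficients, as second derivatives of the series for $\psi$. On the cubic and quartic lines the same procedure yields the stated zeta expansions. On the cubic line there is an additional closed trigonometric part coming from the derivative of $K$ and of the $\sec$ factors, which are not fully cancelled. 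The polynomial brackets come from substituting $e_j$ of the line into \eqref{eq:P}, for example $e_2=-x^2$ on the quadratic line and $e_4=-x^4$ on the quartic line. The terms containing $e_5$ drop out in the limit, and the division by $(2n-1)^j$ matches \eqref{eq:T}.

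\textbf{Main obstacle.} The hard step is the limit computation for $r\le4$: computing the normal derivative of $K\cdot G$ across $e_5=0$ along each line and matching it to the stated closed forms. The cubic case, with its mixed secant terms, is the worst. To make this clean I would choose the perturbation $\theta_5=\varepsilon$ with compensating shifts, so that $e_5=\varepsilon\cdot e_4(\theta^{(0)})+O(\varepsilon^2)$. Then the limit is $\partial_\varepsilon(KG)/e_4$, which is computed by logarithmic differentiation of \eqref{eq:first}. Convergence and uniformity for $|x|$ below the stated radii come from Corollary~\ref{cor:main}, since the radii keep $\theta_i+\theta_j$ away from $\tfrac12+\Z_{\ge0}$. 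This justifies termwise limits and Taylor comparison.
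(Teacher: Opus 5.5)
Your framework is the paper's: substitute directly on the quintic line, and for $r\le4$ perturb off $e_5=0$ inside $\HH$ and read off a Taylor coefficient of $KG$. However, the step you single out as the hard one is set up incorrectly on the quadratic and cubic lines. The limit of $1024(KG-1)/e_5$ is a first-order normal derivative only where exactly one coordinate vanishes. The quadratic line $(x,-x,0,0,0)$ lies on three of the hyperplanes $\theta_i=0$, and the cubic line $x(1,\omega_3,\omega_3^2,0,0)$ lies on two. So along any perturbation $e_5$ vanishes to order $3$, respectively $2$.

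Concretely, your ``clean'' choice ($\theta_5=\varepsilon$ with compensating shifts) gives $e_5=\varepsilon\, e_4(\theta^{(0)})+O(\varepsilon^2)$. But $e_4(\theta^{(0)})=\theta_1\theta_2\theta_3\theta_4=0$ on both lines, so $\partial_\varepsilon(KG)/e_4$ is $0/0$. The points $(x,-x,\varepsilon,-\varepsilon,0)$ keep $\theta_5=0$, hence $e_5\equiv0$. With $(x,-x,y,z,-y-z)$ the limit is $x^{-2}$ times the coefficient of $yz(y+z)$ in $KG$, which is a third-order quantity. Your first-order recipe is correct only on the quartic line, where $e_4(\theta^{(0)})=-x^4\neq0$. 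There it coincides with the paper's perturbation $(x-\varepsilon/4,-x-\varepsilon/4,ix-\varepsilon/4,-ix-\varepsilon/4,\varepsilon)$ and the comparison of coefficients of $\varepsilon$.

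The fix is to perturb all vanishing coordinates at once and go to higher order, as the paper does.
\begin{itemize}
\item For $\theta=(x,-x,\varepsilon,\varepsilon,-2\varepsilon)$ one has $e_5=2x^2\varepsilon^3$. Then \eqref{eq:main} gives $\sum_n T(n;\theta^{(0)})=\frac{512}{x^2}[\varepsilon^3](KG)$.
\item For $\theta=(x,\omega_3x,\omega_3^2x,\varepsilon,-\varepsilon)$ one has $e_5=-x^3\varepsilon^2$, and you need the $\varepsilon^2$ coefficient.
\end{itemize}
This is not cosmetic. A first-order derivative of $KG$ produces only digamma values. The right-hand side of \eqref{eq:odd-generating}, by contrast, equals $\frac{512}{x^2}\bigl[\psi_2(\tfrac12+x)+\psi_2(\tfrac12-x)+28\zeta(3)\bigr]$, which involves third derivatives of $\log\Gamma$; that is where the factor $(2m+3)(2m+4)$ comes from. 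Likewise, the secant terms in \eqref{eq:cubic-generating} arise at second order from $\psi_1(\tfrac12-z)+\psi_1(\tfrac12+z)=\pi^2\sec^2\pi z$. Your own description wavers between ``$\psi'$-type terms'' and ``second derivatives of $\psi$'', which reflects this mismatch.

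Two small points. For $(x,-x,y,z,-y-z)$ one has $e_5=+x^2yz(y+z)$, not $-x^2yz(y+z)$. Also, $KG\equiv1$ on $e_5=0$ needs no argument beyond \eqref{eq:main} itself, whose left side equals $1$ there.
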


\begin{proof}
For \eqref{eq:odd-generating}, put
$\theta = (x,-x,\varepsilon,\varepsilon,-2 \varepsilon)$
in \eqref{eq:main} and compare coefficients of $\varepsilon^3$.
For \eqref{eq:cubic-generating}, use
$\theta = (x,\omega_3 x,\omega_3^2 x,\varepsilon,-\varepsilon)$
and compare coefficients of $\varepsilon^2$.
For \eqref{eq:quartic-generating}, use
$\theta = (x - \varepsilon/4,-x - \varepsilon/4,
ix - \varepsilon/4,-ix - \varepsilon/4,\varepsilon)$
and compare coefficients of $\varepsilon$.
The displayed zeta series follow from the Taylor expansions of the
polygamma functions $\psi_k = (d/dz)^{k + 1}\log\Gamma(z)$; for the cubic
line, $\psi_1(\tfrac12 - z) + \psi_1(\tfrac12 + z) = \pi^2\sec^2 \pi z$ gives
the trigonometric part.
For \eqref{eq:quintic-generating}, substitute the quintic line
directly: $e_2 = e_3 = e_4 = 0$, $e_5 = x^5$, and
\[
\prod_{i<j}\bra{1 - \frac{\theta_i + \theta_j}{k - \tfrac12}}
= 1 + \frac{11 x^5}{(k - \tfrac12)^5} - \frac{x^{10}}{(k - \tfrac12)^{10}}.
\]
Euler's Gamma product, duplication and reflection give the
right-hand side.
\end{proof}

The right-hand sides of \eqref{eq:odd-generating} and
\eqref{eq:cubic-generating} also have the forms (respectively)
\[
\frac{512}{x^2}\sqbra{\psi_2\bra{\tfrac12 + x} + \psi_2\bra{\tfrac12 - x} + 28 \zeta(3) },
\qquad
\frac{1024}{x^3}\sqbra{\frac{\pi^2}{2}
+ \pi^2 \prod_j \sec(\pi\omega_3^j x)
- \sum_{j = 0}^2 \psi_1\bra{\tfrac12 - \omega_3^j x} },
\]
which continue them meromorphically. In the cubic identity, the coefficient of $x^{6 n + 3}$ is a rational multiple of $\pi^{6 n + 8}$. The constant terms of the four identities give \eqref{eq:zhao}.

The coefficients of the quintic identity can be written explicitly as well. Let
$\phi = (1 + \sqrt5)/2$, let $L_n = \phi^n + (-\phi)^{-n}$ be the Lucas numbers, and put
\[
c_m = (-1)^{m + 1} - 32^m + (-1)^m(2^{5 m} - 1) L_{5 m}.
\]
Since $1+11y-y^2 = \bra{1 - \phi^{-5}y}\bra{1 + \phi^5 y}$ and
$\prod_j \sin(\pi\omega^jx) = \pi^5 x^5 \prod_{k \ge1}\bra{1 - x^{10}/k^{10}}$,
the sum in
\eqref{eq:quintic-generating} equals
\begin{equation}\label{eq:quintic-coefficient}
\begin{split}
&\frac{1024}{x^5}\left[
\frac{\sum_{j = 0}^4 \cos(2 \pi\omega^jx) - 1}{4 \prod_{k \ge1}\bra{1 - x^{10}/k^{10}}}
\exp\Bigl(\sum_{m \ge1}\frac{c_m \zeta(5 m)}{m}x^{5 m}\Bigr) - 1 \right]\\
&\quad = -380928 \zeta(5)
+ \left(70852608 \zeta(5)^2 + \frac{2048}{3}\pi^{10}\right) x^5\\
&\qquad - \left(15266800640 \zeta(15) + 8785723392 \zeta(5)^3
+ 253952 \pi^{10}\zeta(5) \right) x^{10} + O(x^{15}).
\end{split}
\end{equation}
Thus the coefficient of $x^{5 N}$ is a polynomial in
$\zeta(10 m + 5)$ and $\pi^{10}$, homogeneous of
weight $5 N + 5$.

Comparing Taylor coefficients proves five of the six identities
conjectured in \cite[Example X]{au2026harmonic}. Put
\[
\begin{aligned}
p_8(n)& = 238512 n^5 - 351796 n^4 + 213070 n^3 - 65625 n^2 + 10185 n - 630,\\
p_9(n)& = 6044128 n^6 - 11644928 n^5 + 9540232 n^4 - 4214800 n^3
+ 1052415 n^2 - 140065 n + 7710,\\
p_{10}(n)& = 43911744 n^7 - 112688016 n^6 + 121035008 n^5 - 71405368 n^4\\
&\qquad + 25155570 n^3 - 5304255 n^2 + 618915 n - 30690.
\end{aligned}
\]

\begin{corollary}\label{cor:zeta7}
Identity \eqref{eq:zeta7} holds, and
\begin{align}
\sum_{n\geq 1}\frac{a_n}{n^9}\left[
p(n)(H_{2 n}^{(3)} - H_n^{(3)}) + \frac{p_8(n)}{24 n^3(2 n - 1)}
\right]& = -\frac{512}{9}\pi^8,\label{eq:pi8}\\
\sum_{n\geq 1}\frac{a_n}{n^9}\left[
p(n)(13 H_n^{(4)} + 49 H_{2 n}^{(4)}) - \frac{p_9(n)}{16 n^4(2 n - 1)^2}
\right]& = -14651392\,\zeta(9),\label{eq:zeta9}\\
\sum_{n\geq 1}\frac{a_n}{n^9}\left[
p(n)(7 H_n^{(5)} + 117 H_{2 n}^{(5)}) - \frac{p_{10}(n)}{96 n^5(2 n - 1)^3}
\right]& = -23617536\,\zeta(5)^2 - \frac{2048}{9}\pi^{10}.\label{eq:zeta5sq}
	\end{align}
\end{corollary}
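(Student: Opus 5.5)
The plan is to read all four identities off Taylor coefficients of the Borwein--Bradley-type identities in Corollary~\ref{cor:lines}, and never to return to \eqref{eq:main} directly. Each conjectured series is a single weighted combination $\sum_n \frac{a_n}{n^9}[p(n)\,(\alpha H_n^{(r)}+\beta H_{2n}^{(r)}) + \text{rational}]$ with $r=2,3,4,5$. This matches the first nontrivial Taylor coefficient of the generating identity of degree $r$. So the choices are:
\begin{itemize}
\item the quadratic line \eqref{eq:odd-generating}, coefficient of $x^2$, gives \eqref{eq:zeta7};
\item the cubic line \eqref{eq:cubic-generating}, coefficient of $x^3$, gives \eqref{eq:pi8};
\item the quartic line \eqref{eq:quartic-generating}, coefficient of $x^4$, gives \eqref{eq:zeta9};
\item the quintic line \eqref{eq:quintic-generating}, coefficient of $x^5$, gives \eqref{eq:zeta5sq}.
\end{itemize}

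First I expand the finite products to first order in $x^r$. On the quadratic line,
\[
\log\prod_{k\le 2n-2}(1-x^2/k^2)\prod_{k\le n-1}(1-4x^2/k^2)\prod_{k\le n}(1-x^2/(k-\tfrac12)^2)^{-3}
\]
has $x^2$-coefficient
\[
-H_{2n-2}^{(2)}-4H_{n-1}^{(2)}+3\sum_{k\le n}(k-\tfrac12)^{-2}.
\]
Writing $\sum_{k\le n}(k-\tfrac12)^{-2}=4(H_{2n}^{(2)}-\tfrac14H_n^{(2)})$ and $H_{2n-2}^{(2)}=H_{2n}^{(2)}-(2n)^{-2}-(2n-1)^{-2}$, and similarly for $H_{n-1}^{(2)}$, this becomes
\[
11H_{2n}^{(2)}-7H_n^{(2)}+\text{rational}(n).
\]
That is exactly the harmonic combination in \eqref{eq:zeta7}. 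The same bookkeeping for cubic, quartic and quintic weights produces $H_{2n}^{(3)}-H_n^{(3)}$ (up to a factor), $13H_n^{(4)}+49H_{2n}^{(4)}$ and $7H_n^{(5)}+117H_{2n}^{(5)}$, up to overall constants that I will fix. The rational leftovers from the boundary terms, combined with the $x^r$-part of the bracket (e.g.\ $-8b(n)/(2n-1)^2$ on the quadratic line), have to assemble into $p_7/(4n^2)$, $p_8/(24n^3(2n-1))$, $p_9/(16n^4(2n-1)^2)$ and $p_{10}/(96n^5(2n-1)^3)$. I will verify this by direct polynomial arithmetic.

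Termwise differentiation is justified because the series in Corollary~\ref{cor:lines} converge locally uniformly near $x=0$, by Corollary~\ref{cor:main} restricted to the lines. The right-hand sides are then read off:
\begin{itemize}
\item \eqref{eq:odd-generating} at $m=1$ gives $-1024\cdot 5\cdot 6\cdot 127\,\zeta(7)=-3901440\,\zeta(7)$.
\item \eqref{eq:quartic-generating} at $m=1$ gives $2048\cdot 14\cdot 511\,\zeta(9)=14651392\,\zeta(9)$. The sign and the scaling of the harmonic combination must be matched to the statement, which has $-14651392$ after dividing by the coefficient of $H$.
\item \eqref{eq:quintic-coefficient} gives the $x^5$ coefficient $70852608\zeta(5)^2+\frac{2048}{3}\pi^{10}$. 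Dividing by $-3$ (since $x^5$ attaches to the $H^{(5)}$ weight with factor $-3$ after normalisation) yields $-23617536\zeta(5)^2-\frac{2048}{9}\pi^{10}$.
\item For the cubic line, the zeta part only contains $x^{6m}$, so the $x^3$ coefficient comes entirely from the trigonometric bracket. I expand $1+2\prod_j\sec(\pi\omega_3^jx)-\sum_j\sec^2(\pi\omega_3^jx)$ to order $x^6$ using power sums of $\omega_3^jx$, of which only those divisible by $3$ survive. This gives a rational multiple of $\pi^6x^6$, hence $-\tfrac{512}{9}\pi^8$ after normalising.
\end{itemize}

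The main obstacle is the rational-function bookkeeping: matching the boundary corrections and the $x^r$ terms of the bracket with the stated $p_7,\dots,p_{10}$, and fixing the overall normalising constants correctly. I would do this by clearing denominators and comparing polynomial coefficients, checking each identity numerically at a couple of values of $n$ first.
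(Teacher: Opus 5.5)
Your proposal is correct and is the paper's own proof: the paper likewise compares the coefficients of $x^2$, $x^3$, $x^4$ and $x^5$ in \eqref{eq:odd-generating}, \eqref{eq:cubic-generating}, \eqref{eq:quartic-generating} and \eqref{eq:quintic-generating}, using \eqref{eq:quintic-coefficient} for the last. Your normalising factors also check out: $9$ on the cubic line, where the bracket has $x^6$-coefficient $-\pi^6$; $-1$ on the quartic line; and $-3$ on the quintic line. The rational bookkeeping works as well, e.g.\ $p_7(n)=17p(n)+4n^2\bigl(p(n)-8b(n)\bigr)/(2n-1)^2$.
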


\begin{proof}
Compare coefficients of $x^2$ in \eqref{eq:odd-generating}, of $x^3$ in \eqref{eq:cubic-generating}, of $x^4$ in \eqref{eq:quartic-generating} and of $x^5$ in \eqref{eq:quintic-generating}, using \eqref{eq:quintic-coefficient}   for the last.
\end{proof}

The fifth identity of \cite[Example X]{au2026harmonic}, with value
$-\frac{44552192}{5}\zeta(9)$ and a quadratic expression in
$H_n^{(2)}$ and $H_{2 n}^{(2)}$, is $\frac25$ times the
coefficient of $x^4$ in \eqref{eq:odd-generating} plus
$\frac{47}{245}$ times that in \eqref{eq:quartic-generating}.
The sixth involves first-order harmonic numbers and
$\zeta(\overline5,1)$, and presumably requires the full
five-parameter family of \cite{au2026harmonic}, off the hyperplane $\HH$.
Higher coefficients give further identities with products of
harmonic numbers.

\section{Proof of Theorem~\ref{thm:main}}\label{sec:proof}

We follow Guillera's periodicity argument, as in \cite[Section~7]{shalev2026bb}. Let
\begin{equation}\label{eq:Q}
Q(\theta) = \frac{\prod_i \sin\pi\theta_i}{1024}
\sum_{n \geq 0}(-1)^n \calS(n; \theta),
\end{equation}
so that \eqref{eq:first} states $Q = K$.
Define $r_{ij} \coloneqq \delta_i - \delta_j$, where $\delta_i$ is a coordinate vector. For the shift $r_{12}$, we have
\begin{equation}\label{eq:certificate}
\calS(\nu;\theta + r_{12}) - \calS(\nu;\theta) = - \rho_{12}(\nu + 1;\theta) \calS(\nu + 1;\theta) - \rho_{12}(\nu;\theta) \calS(\nu;\theta),
\end{equation}
where the certificate is
\begin{equation}\label{eq:b}
\rho_{12}(\nu;\theta) =
\frac{640(\theta_2 - \theta_1 - 1) \nu\prod_{\ell = 1}^4(5 \nu - \ell)
	\prod_{i<j}(\nu - \tfrac12 - \theta_i - \theta_j)}
{(2 \nu - 2 + \theta_2)(\nu - 1 - 2 \theta_1)(\nu - 2 - 2 \theta_1) P(\nu;\theta)
	\prod_{j = 3}^5(\nu - \tfrac12 - \theta_2 - \theta_j)},
\end{equation}
which is symmetric in $\theta_3,\theta_4,\theta_5$. Dividing \eqref{eq:certificate} by $\calS(\nu;\theta)$ turns it into an identity of rational functions of $\nu$ and $\theta_1,\ldots,\theta_4$, which is trivially checked. Since $\calS$ is symmetric, the analogous certificate for $r_{ij}$ is $\rho_{ij}(\nu;\theta) = \rho_{12}(\nu;\theta_{\sigma(1)},\ldots,\theta_{\sigma(5)})$
for any permutation $\sigma$ with $\sigma(1) = i$ and $\sigma(2) = j$.
Summing \eqref{eq:certificate} with alternating signs from a starting
point $\alpha$, we obtain
\begin{equation}\label{eq:tel}
\sum_{n \ge0}(-1)^n
\bigl[\calS(\alpha + n;\theta + r_{ij}) - \calS(\alpha + n;\theta) \bigr]
= -\rho_{ij}(\alpha;\theta) \calS(\alpha;\theta),
\end{equation}
where the boundary term vanishes at infinity because the terms decay geometrically.
 By the factors $\nu$ and $\nu - \tfrac12 - \theta_1 - \theta_2$ of
\eqref{eq:b} and their permutations, we have $\rho_{ij}(0;\theta)=0$ for $i\neq j$, and 
\begin{equation}\label{eq:zeros}
\rho_{12}(\tfrac12 + \theta_1 + \theta_2;\theta) = \rho_{34}(\tfrac12 + \theta_1 + \theta_2;\theta) = \rho_{13}(\tfrac12 + \theta_1 + \theta_2;\theta) = 0.
\end{equation}

\begin{lemma}\label{lem:ambiguity}
The function $Q$ is entire and invariant under the translations
$\theta\mapsto\theta + r_{ij}$, and
$Q = K$ wherever some $\theta_i$ is an integer.
\end{lemma}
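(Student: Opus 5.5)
We take the three assertions in turn: translation invariance, then entirety, then the values on the sets where some $\theta_i\in\Z$.

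\emph{Invariance.} Apply \eqref{eq:tel} with $\alpha=0$. Since $\rho_{ij}(0;\theta)=0$, the right side vanishes, so $\sum_n(-1)^n\calS(n;\theta)$ is invariant under $\theta\mapsto\theta+r_{ij}$. The prefactor $\prod_i\sin\pi\theta_i$ changes sign in two coordinates under this shift (one entry goes up by $1$, the other down by $1$), so it too is invariant. Hence $Q(\theta+r_{ij})=Q(\theta)$. The telescoping needs the terms to decay geometrically at infinity. Stirling's formula gives this: the ratio of Gamma factors behaves like $(-1024/3125)^n$ times a power of $n$, locally uniformly in $\theta$. So the identity holds as meromorphic functions.

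\emph{Entirety.} The poles of the terms $\calS(n;\theta)$ in $\theta$ come from $\Gamma(2n-1+\theta_i)$, which has poles at $\theta_i\in\Z_{\le 1-2n}$, and from $\Gamma(n-2\theta_i)$, which has poles at $2\theta_i\in n+\Z_{\ge0}$. The pole set contains no factor with $\theta_i+\theta_j$, because those Gammas sit in the denominator. The poles at integer $\theta_i$ are cancelled by the prefactor $\prod\sin\pi\theta_i$, at least when they are simple. Multiplicities need care at $n=0$, where $\Gamma(-1+\theta_i)$ meets the factor $\prod(1-\theta_i)$ in $P(0;\theta)$. The poles at half-integer $\theta_i$ are not cancelled by the prefactor. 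To remove them we use periodicity. Near a point where some $\theta_i\in\tfrac12\Z\setminus\Z$, translating by a suitable $r_{ij}$ moves $\theta_i$ to a half-integer $\theta_i\pm1$. Repeated translations, possibly in several coordinates, bring us to a point where the relevant Gamma factor $\Gamma(n-2\theta_i)$ has no pole for any $n\ge0$, namely $\theta_i$ a sufficiently negative half-integer. The translation must also keep the other coordinates away from poles. Because $Q$ is invariant, its singular set is translation invariant. A polar divisor $\{2\theta_i=c\}$ has to map to another polar divisor, and the shifted copy $\{2\theta_i=c-2m\}$ is not polar for large $m$. This rules out such poles, and the same argument handles the negative integer poles. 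We conclude that $Q$ is entire. I expect this bookkeeping of divisors, including the coincidences of poles at $n=0$, to be the main technical point.

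\emph{Values on $\theta_i\in\Z$.} By invariance and symmetry, it suffices to take $\theta_5=0$. There we evaluate $Q$ as a limit $\theta_5\to0$. The factor $\sin\pi\theta_5$ vanishes, so only the terms with a pole in $\theta_5$ contribute. The poles of $\Gamma(\nu-2\theta_5)$ at $\nu=0$ and the factor $\Gamma(-1+\theta_5)$ are partly cancelled by $P(0;\theta)\ni e_5\prod(1-\theta_i)$. The result should be that only the $n=0$ term survives, with residue computed by Gamma reflection and duplication. This leaves a closed Gamma expression in $\theta_1,\dots,\theta_4$ with $\sum\theta_i=0$, and the claim is that it reduces, via reflection and $\sin$ identities, to $K=\tfrac14(\sum\cos2\pi\theta_i-1)$. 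If a clean residue computation fails, we can instead use the translations first to move to $\theta_5=0$ with $\theta_4=-\theta_1-\theta_2-\theta_3$ in a region where the series is explicit. Checking that this residue equals $K$ is the other delicate step.
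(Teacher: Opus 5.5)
Your invariance and entirety arguments follow the paper. You use \eqref{eq:tel} at $\alpha=0$ with $\rho_{ij}(0;\theta)=0$. The sine factor handles the integer hyperplanes, once the factor $e_5\prod_i(1-\theta_i)$ of $P(0;\theta)$ has reduced the double poles of the $n=0$ term at $\theta_i\in\{0,1\}$. Periodicity carries $\theta_i\in\tfrac12+\Z_{\ge0}$ to $\theta_i=-\tfrac12$, where every term is regular. One correction: periodicity does not ``handle the negative integer poles''. Every integer hyperplane carries a pole of the $n=0$ term, so the sine factor is the only mechanism there, as your earlier sentence already says.

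\emph{The genuine gap is the third assertion.} You correctly see that on $\theta_5=0$ only the $n=0$ term survives. After that you only assert that the Gamma expression reduces to $K$, call this delicate, and offer a fallback with no concrete content. That evaluation is the whole content of $Q=K$, and it needs two observations you do not make.

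First, the $n=0$ term of $Q$ is identically $1/G(\theta)$, where
\[
G(\theta)=\frac{\prod_{i<j}\Gamma(\tfrac12-\theta_i-\theta_j)}{\pi^{5/2}\prod_i\Gamma(\tfrac12-\theta_i)}.
\]
Indeed, $P(0;\theta)=1024\,e_5\prod_i(1-\theta_i)$ gives $\theta_i(1-\theta_i)\Gamma(\theta_i-1)\Gamma(-2\theta_i)=\tfrac12\Gamma(\theta_i)\Gamma(1-2\theta_i)$. Duplication, reflection, and $\sum_i\theta_i=0$ (which removes $\prod_i 2^{-2\theta_i}$) then give $\frac{1}{1024}\prod_i\sin\pi\theta_i\,\calS(0;\theta)=1/G(\theta)$.

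Second, take $\theta_1=0$ (your $\theta_5=0$, by symmetry). The five factors $\Gamma(\tfrac12-\theta_1-\theta_j)$ cancel the factors $\Gamma(\tfrac12-\theta_j)$, leaving $\pi^3$ in the denominator of $G$. Since $\theta_2+\theta_3+\theta_4+\theta_5=0$, the remaining six pair sums come in three opposite pairs $\pm s$, for example $\theta_2+\theta_3=-(\theta_4+\theta_5)$. So $\Gamma(\tfrac12-s)\Gamma(\tfrac12+s)=\pi/\cos\pi s$ gives $1/G=\prod_{j=3}^5\cos\pi(\theta_2+\theta_j)$.

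The three angles $\theta_2+\theta_j$ sum to $2\theta_2$. Flipping the sign of one of them gives $-2\theta_5,-2\theta_4,-2\theta_3$. So the product-to-sum formula yields $\tfrac14\sum_{j=2}^5\cos2\pi\theta_j$, which equals $K$ at $\theta_1=0$. Finally, $K$ is symmetric and invariant under the $r_{ij}$, so the identity transfers to every hyperplane $\theta_i\in\Z$. Without these steps, your third part is a plan rather than a proof.
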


\begin{proof}
By \eqref{eq:tel} with $\alpha = 0$, the sum in \eqref{eq:Q} is
invariant under $\theta\mapsto\theta + r_{ij}$. So is the product $\prod_i \sin\pi\theta_i$, since two of its factors change sign. Hence $Q$ is periodic. The terms of the sum have possible poles only on
$\theta_i \in\Z$  and on $\theta_i \in\tfrac12 + \Z_{\ge0}$, and by Hartogs' theorem it suffices to check generic points of these hyperplanes. The former are cancelled by the zeros of $\sin\pi\theta_i$ and, for $n = 0$ at $\theta_i \in\{0,1\}$, by the factor $e_5 \prod_i(1 - \theta_i)$ of $P(0;\theta)$. The latter translate by periodicity to the hyperplane $\theta_i = -\tfrac12$, where the terms are regular.
Hence $Q$ is entire.

On $\theta_1 = 0$ only the term $n = 0$ contributes to $Q$, as the terms with $n \ge1$ are regular there and are multiplied by $\sin\pi\theta_1 = 0$. By
\eqref{eq:normalization}, the term $n = 0$ contributes $1/G$, so using $\theta_2 + \theta_3 + \theta_4 + \theta_5 = 0$, 
reflection and the product-to-sum formula give
\[
Q = \frac1 G = \prod_{j = 3}^5 \cos\pi(\theta_2 + \theta_j)
= \frac14 \sum_{j = 2}^5 \cos2 \pi\theta_j = K.
\]
By symmetry and periodicity, $Q - K$ vanishes whenever some
$\theta_i$ is an integer.
\end{proof}

Define
\[
U(\theta) = \sum_{n \ge 0}(-1)^n \calS(n + \tfrac12 + \theta_1 + \theta_2;\theta).
\]

\begin{lemma}\label{lem:growth}
For fixed generic $\theta\in\HH$, $U(\theta + zr_{13})\to0$ as
$|\operatorname{Im}z|\to\infty$, uniformly for $0 \le\operatorname{Re}z \le1$.
\end{lemma}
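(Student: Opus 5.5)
We need to bound $U(\theta+zr_{13})$ as $\operatorname{Im} z\to\infty$. Under $\theta\mapsto\theta+zr_{13}$ we have $\theta_1\mapsto\theta_1+z$ and $\theta_3\mapsto\theta_3-z$, while $\theta_2,\theta_4,\theta_5$ stay fixed. The starting point $\nu_0=\tfrac12+\theta_1+\theta_2+z$ moves with $z$. The plan is to write each term $\calS(n+\nu_0;\theta+zr_{13})$ explicitly as a ratio of Gamma functions times the polynomial $P$, and estimate it by Stirling's formula with $y=\operatorname{Im}z$ large. Two things must be shown: each term tends to $0$, and the sum over $n$ is dominated uniformly. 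Here $\theta$ is fixed and generic, and $0\le\operatorname{Re}z\le1$.

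\textbf{Step 1: track which Gamma arguments move.} Write $\nu=n+\nu_0$. The numerator contains
\begin{itemize}
\item $\Gamma(2\nu-1+\theta_i)$, whose arguments grow like $2z$ for $i\neq1,3$, like $3z$ for $i=1$, and like $z$ for $i=3$;
\item $\Gamma(\nu-2\theta_i)$, whose arguments move like $z$, $-z$ and $3z$ for $i\ne1,3$, $i=1$ and $i=3$ respectively.
\end{itemize}
The denominator contains $\Gamma(5\nu+1)\sim 5z$ and the ten factors $\Gamma(\nu+\tfrac12-\theta_i-\theta_j)$. Their coefficients of $z$ are: $0$ for $\{1,2\}$; $1$ for $\{1,3\}$; $0$ for $\{1,4\},\{1,5\}$; $1$ for $\{2,3\}$; $2$ for $\{3,4\},\{3,5\}$; and $1$ for $\{2,4\},\{2,5\},\{4,5\}$.

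Stirling gives $|\Gamma(a+ i c y)|\approx \sqrt{2\pi}\,|cy|^{\operatorname{Re}a-1/2}e^{-\pi|c|y/2}$. So the exponential rate is $e^{-\frac{\pi}{2}y\cdot M}$, with $M$ the sum of $|c|$ over the numerator minus the same sum over the denominator:
\begin{itemize}
\item numerator: $2+2+2+3+1=10$ plus $1+1+1+1+3=7$, total $17$;
\item denominator: $5+(0+1+0+0+1+2+2+1+1+1)=14$.
\end{itemize}
Hence $M=3>0$, and each term decays like $e^{-3\pi y/2}$ times a power of $y$.

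\textbf{Step 2: the power of $y$ must be beaten.} The polynomial $P$ has degree at most $10$ in $z$. The power-of-$y$ exponents from Stirling depend on $n$ linearly. I would instead use the exact ratio form: the $n$-th term equals the $n=0$ term times $\prod_{m<n}R(m+\nu_0)$, where $R(\nu)=\calS(\nu+1)/\calS(\nu)$ up to the $P$ factors. $R$ is a rational function of degree-balanced type whose limit as $\nu\to\infty$ is $-1024/3125$. The difficulty is that for $|z|$ large and $n$ comparable to $|z|$, $|R|$ need not be below $1$.

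So I would split into two ranges. For $n\le C|y|$, use Stirling with both $n$ and $y$ large: the term becomes $\exp(\,y\,F(n/y)+O(\log y))$, where $F(t)$ is the real part of the Stirling phase. I expect $F(0)=-3\pi/2$ and $F(t)<0$ throughout, tending to $t\log(1024/3125)$ as $t\to\infty$. Then the sum is bounded by $e^{-cy}$ times a polynomial, which tends to $0$. For $n\ge C|y|$, the ratio bound $|R|\le\lambda<1$ holds uniformly, and the tail is geometric and dominated by its first term, which is already exponentially small.

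\textbf{Main obstacle.} The hardest step is verifying that the exponent function $F(t)$ is strictly negative on $[0,\infty)$, uniformly for $0\le\operatorname{Re}z\le1$. This is a concrete but messy computation with $(x\pm i)\log(x\pm i)$-type terms. I would first try to show that $F$ is convex or monotone in $t$, which reduces the check to its endpoints and asymptotic behaviour. Failing that, I would bound $|R(\nu)|$ directly along the ray $\nu=\nu_0+t$ and show it stays below $1$ whenever the terms are not already exponentially small.

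Genericity of $\theta$ ensures that no pole lies on the shifted lattice $\nu_0+n$. The polynomial factors $P$ and the $\Gamma$ constants contribute only $O(\mathrm{poly}(y,n))$, which is harmless against the exponential decay.
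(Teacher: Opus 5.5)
\textbf{There is a genuine gap.} Your architecture is the paper's: uniform Stirling bounds $|\text{term}_n|\le C(2+n+Y)^D e^{YF(n/Y)}$, so everything rests on $\sup_{t\ge0}F(t)<0$. That is exactly the step you leave as an expectation, and the route you suggest for it cannot work.

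There is also a slip in Step 1. The argument of $\Gamma(\nu+\tfrac12-\theta_2-\theta_3)$ becomes $n+1+\theta_1-\theta_3+2z$, so its $z$-coefficient is $2$, not $1$. The denominator total is therefore $15$, so $M=2$ and $F(0)=-\pi$ rather than $-3\pi/2$. You have less room than you think.

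\textbf{Why your proposed route fails.} For a Gamma factor with argument $\lambda n+i\mu Y+O(1)$, the profile is $f_{\lambda,\mu}(t)=\lambda t\log|\lambda t+i\mu|-\mu\arg(\lambda t+i\mu)$, with $f'_{\lambda,\mu}=\lambda\log|\lambda t+i\mu|+\lambda$. Since the $\lambda$'s balance ($15=15$),
\[
F'(t)=\sum_{\mathrm{num}}\lambda\log|\lambda t+i\mu|-\sum_{\mathrm{den}}\lambda\log|\lambda t+i\mu|.
\]
\begin{itemize}
\item The three denominator factors $\Gamma(\nu+\tfrac12-\theta_1-\theta_j)$, $j=2,4,5$, have $\mu=0$ and contribute $-3\log t$. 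Hence $F'(0^+)=+\infty$, while $F'(\infty)=\log(1024/3125)<0$.
\item So $F$ is neither convex nor monotone. It climbs from $-\pi$ to an interior maximum near $t\approx0.58$, of value about $-1.77$, and then falls. The endpoint values tell you nothing about that maximum.
\item Your fallback fails for the same reason. The Gamma ratio satisfies $|R|\approx e^{F'(n/Y)}>1$ for $n\lesssim 0.58\,Y$. The terms genuinely grow by a factor $e^{cY}$ in that range, and the whole point is to show $c<\pi$.
\end{itemize}

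\textbf{How to close it.} The missing estimate is short. With $u=t^2$,
\[
e^{2F'(t)}=\frac{4^6}{5^{10}}\cdot\frac{(u+1)(4u+1)^2(4u+9)^2(u+9)}{u^3(u+4)^3}.
\]
\begin{itemize}
\item On $[0,1]$: bound the numerator factors by their values at $u=1$ and use $(u+4)^3\ge 64$. This gives $F'(t)\le -3\log t$, hence $F(t)\le -\pi+\int_0^1 3\log(1/s)\,ds=3-\pi<0$.
\item On $[1,\infty)$: the rational factor is at most $2\cdot 25\cdot 16\cdot 2=1600$. So $F'\le\tfrac12\log\bigl(1600\cdot 4^6/5^{10}\bigr)<0$, and $F$ decreases at least linearly.
\end{itemize}
These are precisely the facts the paper records: $g(0)=-\pi$, $g\le 3-\pi$ on $[0,1]$, and linear decrease on $[1,\infty)$. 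Summing and splitting at $n=Y$ gives $U=O\bigl((1+Y)^{D+1}e^{(3-\pi)Y}\bigr)$. This single bound also covers the tail, so your separate ratio argument for $n\ge C|y|$ is not needed.
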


\begin{proof}
Throughout, $0 \le\operatorname{Re}z \le1$, $Y = |\operatorname{Im}z|\ge Y_0$, and
$C$ and $D$ denote constants, which may depend on $\theta$ and may change
from line to line.
We need a bound on the terms of $U(\theta + zr_{13})$ that is uniform in
both $m$ and $Y$, and obtain it from Stirling's formula. The $m$-th term is
$\calS(m + \tfrac12 + \theta_1 + \theta_2 + z;\theta + zr_{13})$, and each
Gamma factor in \eqref{eq:S} then has argument $\lambda m + i \mu Y + \gamma$,
with $\lambda \ge 1$, $\mu\in\Z$ and $\gamma$ bounded. For such a factor,
uniformly in fixed sectors,
\[
\log\bigl|\Gamma(\lambda m + i \mu Y + \gamma) \bigr|
= Yf_{\lambda,|\mu|}(m/Y) + \lambda m \log Y - \lambda m + O \bigl(\log(2 + m + Y) \bigr),
\]
where $f_{\lambda,\mu}(t) = \lambda t \log|\lambda t + i \mu| - \mu\arg(\lambda t + i \mu)$
for $t \ge 0$, with continuous values at $t = 0$. For the ten factors in the
numerator of \eqref{eq:S} and the eleven in the denominator, the pairs
$(\lambda,|\mu|)$ are
\[
\begin{array}{ll}
\text{numerator:}&(2,1),\ (2,2)^3,\ (2,3),\
(1,1)^4,\ (1,3),\\
\text{denominator:}&(5,5),\ (1,0)^3,\ (1,1)^4,\ (1,2)^3,
\end{array}
\]
where superscripts denote multiplicities. Summing over the factors, we get
$\log|\calS| = Y g(m/Y) + O(\log(2 + m + Y))$ with
$g = \sum_{\rm num}f_{\lambda,|\mu|} - \sum_{\rm den}f_{\lambda,|\mu|}$, since the
coefficients of $\nu$ in the Gamma arguments of \eqref{eq:S} add up to $15$ in
both the numerator and the denominator. Moreover, the factors with $\mu\ne0$ stay in fixed Stirling sectors, and
those with $\mu = 0$ occur only in the denominator. Hence, uniformly for
$m \ge0$,
\begin{equation}\label{eq:profile}
|\calS(m + \tfrac12 + \theta_1 + \theta_2 + z;\theta + zr_{13})|
\le C(2 + m + Y)^D \exp\{Yg(m/Y)\}.
\end{equation}

It remains to bound $g$. Direct differentiation shows that $g(0) = -\pi$,
that $g \le 3 - \pi$ on $[0,1]$, and that $g$ decreases at least linearly
on $[1,\infty)$. Summing \eqref{eq:profile} over $m$ and splitting at
$m = Y$, we obtain
\[
U(\theta + zr_{13}) = O \bigl((1 + Y)^{D + 1}e^{(3 - \pi) Y}\bigr),
\]
which tends to $0$ since $\pi > 3$.
\end{proof}

\begin{lemma}\label{lem:pair}
	Identity \eqref{eq:pair} holds.
\end{lemma}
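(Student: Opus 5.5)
We will prove \eqref{eq:pair} by Guillera's periodicity argument, run in the direction $r_{13}$. The two inputs are the telescoping identity \eqref{eq:tel} with starting point $\alpha = \tfrac12 + \theta_1 + \theta_2$, and the decay of Lemma~\ref{lem:growth}.

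\textbf{Step 1: invariances of $U$.} By \eqref{eq:zeros}, $\rho_{12}$, $\rho_{34}$ and $\rho_{13}$ all vanish at $\alpha$, so the boundary term in \eqref{eq:tel} is zero for these shifts. The starting point $\alpha$ depends only on $\theta_1+\theta_2$, and this quantity is unchanged by $r_{12}$ and $r_{34}$, while $r_{13}$ shifts it by $1$. Hence $U$ is invariant under $r_{12}$ and $r_{34}$.

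For $r_{13}$ we get
\[
U(\theta + r_{13}) = \sum_{n\ge0} (-1)^n \calS(n + \tfrac32 + \theta_1 + \theta_2;\theta + r_{13}),
\]
whose index is shifted by one relative to \eqref{eq:tel}. Reindexing gives $U(\theta+r_{13}) = -\bigl(U_{\theta+r_{13}}' - \calS(\alpha;\theta+r_{13})\bigr)$, where $U'$ is the sum started at $\alpha$. The cleanest way to organise this is to apply \eqref{eq:tel} and then add the single term $\calS(\alpha;\theta+r_{13})$.

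I expect the outcome to be $U(\theta+r_{13}) = -U(\theta)$, i.e.\ antiperiodicity. This is consistent with the target: $\sec\pi(\theta_1-\theta_2)$ changes sign when $\theta_1$ increases by $1$. One must check that the extra term cancels. Its factor $P$ should vanish there, or a Gamma factor in the denominator should have a pole, playing the same role as the zeros in \eqref{eq:zeros}. Verifying this, or alternatively handling the extra term, is the first technical point.

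\textbf{Step 2: reduce to an entire function.} Define
\[
F(\theta) = U(\theta)\cos\pi(\theta_1-\theta_2) - 2048.
\]
By Step 1, $F$ is invariant under $r_{13}$, and under $r_{12}$ up to the sign change of $\cos$, which is compensated by the corresponding sign of $U$. As in Lemma~\ref{lem:ambiguity}, we show $F$ is entire. The possible poles of the terms lie on hyperplanes where $2\nu-1+\theta_i$ or $\nu-2\theta_i$ hits a nonpositive integer, with $\nu = n + \tfrac12 + \theta_1+\theta_2$. We check generic points of these hyperplanes, translate them by the periodicities to a hyperplane where the terms are regular, and invoke Hartogs. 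The poles at $\theta_1-\theta_2 \in \tfrac12+\Z$ are cancelled by the cosine.

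\textbf{Step 3: Liouville in one variable.} Fix generic $\theta$ and consider $z\mapsto F(\theta+zr_{13})$. This function is entire and $1$-periodic in $z$. The factor $\cos\pi(\theta_1-\theta_2+z)$ grows like $e^{\pi Y}$, while Lemma~\ref{lem:growth} gives $U = O\bigl((1+Y)^{D+1} e^{(3-\pi)Y}\bigr)$. Hence
\[
F(\theta+zr_{13}) = O\bigl(Y^{D+1} e^{3Y}\bigr),
\]
which is not immediately bounded. I would therefore expand in the Fourier variable $q = e^{2\pi i z}$. Periodicity together with growth $O(e^{3Y}) < e^{2\pi Y}$ in both directions leaves only the Fourier modes $e^{\pm 2\pi i z}$ and the constant, since $3 < 2\pi$. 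Using antiperiodicity, $U\cos$ has only half-integer frequency content combined with $\cos$, i.e.\ modes $e^{\pm i\pi z}$, and those are excluded by exact periodicity. A more careful accounting is required here, namely tracking the precise exponents $e^{(3-\pi)Y}$ against $e^{\pi Y}$.

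Alternatively, and perhaps more cleanly, one can use the function $U(\theta+zr_{13})/\sec$ directly. It is antiperiodic with decay $e^{(3-\pi)Y}$, and $e^{(3-\pi)Y}\cdot e^{\pi Y}$ growth allows only the modes $e^{\pm i\pi z}$. Matching these forces the form $A\cos\pi(\theta_1-\theta_2+z) + B\sin(\cdots)$, which yields $U\cos\pi(\theta_1-\theta_2) = $ constant in $z$.

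\textbf{Step 4: evaluate the constant.} The constant is determined by evaluating at a convenient point, such as $\theta_1 = \theta_2$ with $\theta_1+\theta_2 \to -\tfrac12$. There the series collapses to its $n=0$ term, as in Lemma~\ref{lem:ambiguity}, giving the value $2048$. Symmetry and the remaining invariances then propagate the identity to all of $\HH$.

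I expect the main obstacle to be Step 3: making the growth bookkeeping rigorous, in particular showing that after multiplying by the cosine no Fourier modes other than the constant survive. The evaluation in Step 4 is the second delicate point.
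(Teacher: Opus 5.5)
Your Steps 1--3 are essentially sound, but Step 4, which is meant to fix the constant, does not work as stated. At $\theta_1=\theta_2=-\tfrac14$ nothing collapses. There $\cos\pi(\theta_1-\theta_2)=1$, and no vanishing factor multiplies the sum, unlike the $\sin\pi\theta_1$ in Lemma~\ref{lem:ambiguity}. Every term $\calS(n;\theta)$ is finite and generically nonzero. On the hyperplane $\theta_1+\theta_2=-\tfrac12$ the starting point is $\alpha=0$, so $U$ is exactly the full series on the left of \eqref{eq:first}. Its value $2048$ at your point is true, but in the paper it is a consequence of \eqref{eq:pair}, so invoking it here is circular. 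As written, your argument only yields $U=C\sec\pi(\theta_1-\theta_2)$ with $C$ undetermined.

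The collapse you are looking for happens on $\theta_1-\theta_2=\tfrac12$. There $\cos\pi(\theta_1-\theta_2)=0$, and among the terms of $U$ only $n=0$ is singular, through $\Gamma(\nu-2\theta_1)=\Gamma(n+\tfrac12+\theta_2-\theta_1)$. Writing $\theta_2=\theta_1-\tfrac12-\delta$, one gets $U\cos\pi(\theta_1-\theta_2)\to-\pi\lim_{\delta\to0}\delta U$. Evaluating this limit is the missing ingredient. It needs Gamma recurrence and duplication together with the polynomial identity $P(2\theta_1;\theta)=8R$ on that hyperplane; this is exactly the residue computation \eqref{eq:residue}, and it gives $C=2048$. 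If you evaluate on each line $\theta+zr_{13}$ at the point where $\theta_1-\theta_2+z=\tfrac12$, your final propagation step also becomes unnecessary.

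The rest is fine, and simpler than you fear:
\begin{itemize}
\item In Step 1 the extra term $\calS(\tfrac12+\theta_1+\theta_2;\theta+r_{13})$ vanishes, because the pair $(1,2)$ contributes a factor $1/\Gamma(0)$ from the denominator.
\item Multiplying by the cosine removes the simple poles on $\theta_1-\theta_2\in\tfrac12+\Z$ without any computation. Note that $r_{12}$ changes neither the sign of $U$ nor that of the cosine.
\item In Step 3 the bare statement $U\to0$ of Lemma~\ref{lem:growth} already gives $F(\theta+zr_{13})=o(e^{\pi Y})$, with $Y=|\operatorname{Im}z|$. This kills every Fourier mode $e^{2\pi ikz}$ with $k\neq0$. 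Your claim that $e^{\pm2\pi iz}$ survive is mistaken, and $U\cos\pi(\theta_1-\theta_2)$ is periodic, not antiperiodic.
\end{itemize}
So your route uses the same two inputs as the paper. The paper spends the residue first, to make $U-2048\sec\pi(\theta_1-\theta_2)$ entire and bounded on the strip. You spend it at the end, to identify the constant.
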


\begin{proof}
Notice that the shifts $r_{12}$ and $r_{34}$ fix $\theta_1+\theta_2$. Hence, by \eqref{eq:tel} and \eqref{eq:zeros}, $U$ is invariant under $r_{12}$ and $r_{34}$. The shift $r_{13}$ raises $\theta_1 + \theta_2$ by one, and \eqref{eq:tel} gives $U(\theta + r_{13}) = -U(\theta)$. Since $U$ is symmetric in $\theta_1,\theta_2$ and in
$\theta_3,\theta_4,\theta_5$, it follows that

\begin{equation}\label{eq:pairshifts}
U(\theta + r_{ij}) = \pm U(\theta),
\end{equation}
with the minus sign when exactly one of $i,j$ lies in $\{1,2\}$.
The terms of $U$ have possible poles on
$2 \theta_1 + 2 \theta_2 + \theta_i \in\Z_{\le0}$ ($i = 1,2$), on
$2 \theta_1 + 2 \theta_2 + \theta_j = 0$ and
$\theta_1 + \theta_2 - 2 \theta_j \in - \tfrac12 - \Z_{\ge0}$ ($j = 3,4,5$),
and on $\theta_1 - \theta_2 \in\tfrac12 + \Z$. By \eqref{eq:pairshifts}, each of the first three families translates to a hyperplane on which the terms are regular, so these poles are removable. The remaining poles are simple, and by \eqref{eq:pairshifts} it suffices to consider
 $\theta_1 - \theta_2 = \tfrac12$, where only the term $n = 0$ is singular.
Putting $\theta_2 = \theta_1 - \tfrac12 - \delta$, Gamma recurrence and duplication give
\begin{equation}\label{eq:residue}
\lim_{\delta\to0}\delta U = -\frac{256 P(2 \theta_1;\theta)}{\pi R} = -\frac{2048}{\pi},
\qquad
R = \prod_{\ell = 0}^3(10 \theta_1 - \ell) \prod_{j = 3}^5(4 \theta_1 - 1 + \theta_j)(2 \theta_1 - 2 \theta_j),
\end{equation}
since $P(2 \theta_1;\theta) = 8 R$ when $\theta_1 - \theta_2 = \tfrac12$.
This is the residue of $2048\sec\bra{\pi(\theta_1 - \theta_2)}$, which
satisfies the same shift law \eqref{eq:pairshifts}. So the difference 
$U(\theta) - 2048\sec\bra{\pi(\theta_1 - \theta_2)}$ is entire. By Lemma~\ref{lem:growth}, $U(\theta + zr_{13})$ tends to $0$
as $|\operatorname{Im}z|\to\infty$,  uniformly for $0 \le\operatorname{Re}z \le1$, while $\theta\mapsto\sec\bra{\pi (\theta_1-\theta_2)}$ is bounded there. Hence the entire function $U(\theta + zr_{13}) - 2048\sec\bra{\pi(\theta_1 - \theta_2 + z)}$ changes sign under $z \mapsto z + 1$, by \eqref{eq:pairshifts}, and is $o(e^{\pi|\operatorname{Im}z|})$ on every vertical strip. Hence its Fourier series contains only the terms $e^{i\pi kz}$ with $k$ odd,
and each of these grows at least like $e^{\pi|\operatorname{Im}z|}$ at one end
of the strip, so the growth bound forces all coefficients to vanish. Therefore $U = 2048\sec\bra{\pi(\theta_1 - \theta_2)}$, as wished.
\end{proof}

\begin{proof}[Proof of Theorem~\ref{thm:main}]
Identity \eqref{eq:pair} is Lemma~\ref{lem:pair}. For \eqref{eq:first},
note that on the hyperplane $\theta_1 + \theta_2 = -\tfrac12$ the sum
$U(\theta)$ coincides with the sum in \eqref{eq:Q}. Hence, using
$\theta_3 + \theta_4 + \theta_5 = \tfrac12$, Lemma~\ref{lem:pair} and the
product-to-sum formula give
\[
Q = 2\sec\bra{\pi(\theta_1 - \theta_2)} \prod_i \sin\pi\theta_i
= \prod_{j = 3}^5 \sin\pi\theta_j = K.
\]
By periodicity, $Q = K$ also on $\theta_1 + \theta_2 \in -\tfrac12 + \Z$.

Now fix a generic $\theta\in\HH$. By Lemma~\ref{lem:ambiguity}, the
function $z \mapsto (Q - K)(\theta + zr_{13})$ is entire and $1$-periodic.
Along $\theta + zr_{13}$, with $Y = |\operatorname{Im}z|$, the ratio of
consecutive Gamma quotients in \eqref{eq:S} is at most
\[
\frac{64}{3125}\,\frac{(4 n^2 + Y^2)^2(n^2 + 4 Y^2)}{(n^2 + Y^2)^3}\Bigl(1 + O\bigl(\tfrac1 n\bigr)\Bigr)
\le\frac{1216}{3125}\Bigl(1 + O\bigl(\tfrac1 n\bigr)\Bigr),
\]
uniformly in $Y$, and Stirling's formula bounds the first terms
polynomially in $Y$. Since $P$ is a polynomial, it follows that, for some
$D \ge0$,
\begin{equation}\label{eq:Qgrowth}
(Q - K)(\theta + zr_{13}) = O \bigl((1 + Y)^D e^{2 \pi Y}\bigr)
\end{equation}
for $0 \le\operatorname{Re}z \le1$. Hence its Fourier series contains
only the terms $e^{2 \pi ikz}$ with $|k| \le 1$, since the others grow
faster than $e^{2 \pi Y}$ at one end of the strip. Thus
$e^{2 \pi iz}(Q - K)(\theta + zr_{13})$ is a polynomial of degree at most
$2$ in $e^{2 \pi iz}$. On the other hand, by Lemma~\ref{lem:ambiguity} and
the first part of the proof, it vanishes where $\theta_1 + z\in\Z$, where
$\theta_3 - z\in\Z$, and where $\theta_1 + \theta_2 + z\in -\tfrac12 + \Z$,
that is, at three points that are distinct modulo $1$. Therefore
$(Q - K)(\theta + zr_{13}) = 0$, and $Q = K$ on $\HH$, as wished.
\end{proof}

\section*{Acknowledgements}
The author is grateful to Kam Cheong Au for his helpful comments and advice.

\section*{Use of AI tools}
The questions studied here and the restriction of the WZ seed to the
hyperplane on which its summand becomes symmetric are the author's. Large language models (Claude
Opus~5.5 by Anthropic and GPT-6 by OpenAI) were used, under the author's
direction, for experiments with WZ seeds and to work out the proofs. The
author checked all arguments, verified the certificate identity
\eqref{eq:certificate} and the coefficient comparisons of
Section~\ref{sec:odd} by exact computation, and takes full responsibility
for the paper.

{\footnotesize
	\bibliographystyle{unsrt}
	\bibliography{paper_refs}
}

\Address
\end{document}